\documentclass[preprint,12pt]{elsarticle}





\usepackage{amssymb}   
\usepackage{amsthm}    
\usepackage{amsmath}   
\usepackage{cases}
\usepackage{stmaryrd}  
\usepackage{titletoc}  
\usepackage{mathrsfs}  






\theoremstyle{plain}
\newtheorem{thm}{Theorem}[section]
\newtheorem{cor}[thm]{Corollary}
\newtheorem{lem}[thm]{Lemma}
\newtheorem{prop}[thm]{Proposition}

\theoremstyle{definition}
\newtheorem{defn}{Definition}[section]

\newtheorem{rmk}{Remark}[section]

\newcommand{\R}{\mathbb{R}}
\newcommand{\E}{\mathbb{E}}
\newcommand{\bR}{\mathbb{R}}

\newcommand{\bP}{\mathbb{P}}

\newcommand{\cM}{\mathcal{M}}

\newcommand{\sF}{\mathscr{F}}
\newcommand{\sP}{\mathscr{P}}

\newcommand{\A}{\textbf{A}}
\newcommand{\B}{\textbf{B}}
\newcommand{\F}{\textbf{F}}
\newcommand{\DP}{\textbf{P}}
\newcommand{\SP}{\textbf{SP}}

\newcommand{\eps}{\varepsilon}
\newcommand{\vf}{\varphi}

\newcommand{\la}{\langle}
\newcommand{\ra}{\rangle}

\newcommand{\ptl}{\partial}
\newcommand{\rrow}{\rightarrow}

\textwidth =16cm \topmargin =-18mm \textheight =23.5cm \oddsidemargin=5pt
\evensidemargin=0pt

\makeatletter
   
   \@addtoreset{equation}{section}
\makeatother



\begin{document}

\begin{frontmatter}



\title{$W^{m,p}$-Solution ($p\ge 2$) of Linear Degenerate Backward Stochastic Partial Differential Equations
in the Whole Space\tnoteref{label1}}

\tnotetext[label1]{Supported by NSFC Grant \#10325101, by Basic Research Program of China (973 Program) Grant \#2007CB814904, by the Science
Foundation for Ministry of Education of China Grant \#20090071110001, by Program for Changjiang Scholars, and by WCU (World Class University)
Program through the Korea Science and Engineering Foundation funded by the Ministry of Education, Science and Technology
(R31-2009-000-20007).}

\author[label2]{Kai Du}
\ead{kdu@fudan.edu.cn}

\author[label2,label3]{Shanjian Tang}
\ead{sjtang@fudan.edu.cn}

\author[label2]{Qi Zhang}
\ead{qzh@fudan.edu.cn}

\address[label2]{Department of Finance and Control Sciences, School of
Mathematical Sciences, Fudan University, Shanghai 200433, China.}

\address[label3]{Graduate Department of Financial Engineering, Ajou
University, San 5, Woncheon-dong, Yeongtong-gu, Suwon, 443-749, Korea.}


\begin{abstract}
In this paper, we consider the backward Cauchy problem of linear degenerate  stochastic partial differential equations. We obtain the
existence and uniqueness results in Sobolev space $L^p(\Omega; C([0,T];W^{m,p}))$ with both $m\geq 1$ and  $p\geq 2$ being arbitrary, without
imposing the symmetry condition for the coefficient $\sigma$ of the gradient of the second unknown---which was introduced by Ma and
Yong~\cite[Prob. Theor. Relat. Fields 113 (1999)]{MaYo99} in the case of $p=2$. To illustrate the application, we give a maximum principle for
optimal control of degenerate stochastic partial differential equations.
\end{abstract}

\begin{keyword}
Degenerate backward stochastic partial differential equations \sep Cauchy problems \sep Sobolev spaces
\end{keyword}

\end{frontmatter}


\section{Introduction}

Let $(\Omega,\mathscr{F},\{\mathscr{F}_{t}\}_{t\geq 0},P)$ be a complete filtered probability space on which a $d'$-dimensional Wiener process
$W=\{W_t;t\geq 0\}$ is defined such that $\{\mathscr{F}_{t}\}_{t\geq 0}$ is the natural filtration generated by $W$, augmented by all the
$P$-null sets in $\mathscr{F}$. We denote by $\mathscr{P}$ the predictable $\sigma$-algebra associated with $\{\mathscr{F}_{t}\}_{t\geq 0}$.
Let $T$ be a fixed positive number.

We consider the backward Cauchy problem for the following linear  stochastic partial differential equation (BSPDE in short)
\begin{eqnarray}\label{dtz6}
    du(t,x) &=& -\sum_{i,j=1}^{d}\sum_{k=1}^{d'}
    \Big\{\big[a^{ij}(t,x)u_{x^j}(t,x)
    +\sigma^{ik}(t,x)q^{k}(t,x)\big]_{x^i}+b^{i}(t,x)u_{x^i}(t,x)\nonumber\\
    &&+\,c(t,x)u(t,x)+\nu^k(t,x) q^k(t,x) +
    f(t,x) \Big\}\,dt
    + \sum_{k=1}^{d'}q^k(t,x) \,dW^k_t,\\
   && \qquad (t,x)\in [0,T)\times\R^d, \quad W_t \triangleq (W^1,\cdots, W^{d'}_t),\quad t\geq 0 \nonumber
\end{eqnarray}
and the terminal condition
\begin{eqnarray}\label{TC} u(T,x)=\vf(x), \quad x\in \R^d. \end{eqnarray} They are a mathematically natural extension of
backward stochastic differential equations (BSDEs) (see e.g. \cite{KPQ97,PaPe90}), whose solution consists of a pair of random fields $(u,q)$.

BSPDEs arise in many applications of probability theory and stochastic processes. For instance, in the optimal control of SDEs with partial
information or more generally of stochastic parabolic PDEs, as adjoint equations of Duncan-Mortensen-Zakai filtering equations (see e.g.
\cite{Bens83,Haus87,NaNi90,Tang98a,Tang98b,Zhou93}) to formulate the stochastic maximum principle for the optimal control, and in the
formulation of the stochastic Feynman-Kac formula (see e.g. \cite{MaYo97}) in mathematical finance. A class of fully nonlinear BSPDEs, the
so-called backward stochastic Hamilton-Jacobi-Bellman equations, appear naturally in the dynamic programming theory of controlled
non-Markovian processes (see \cite{CPY09,EnKa09,Peng92a}). For more aspects of BSPDEs, one can refer to
\cite{BRT03,DuMe10,HMY02,Naga04,Tang05,TaZh09,Tess96}.

In the case of the following \emph{super-parabolicity}:  the matrix
\begin{equation}\label{SuperParab}
(A^{ij}):=\Big(a^{ij}-\frac{1}{2}\sum_{k=1}^{d'}\sigma^{ik}\sigma^{jk} \Big)_{d\times d} \geq \delta I_{d}\quad \hbox{ \rm for some }
\delta>0,
\end{equation} the theory of existence, uniqueness and regularity for solutions to BSPDE~\eqref{dtz6}-\eqref{TC} is rather complete now.
See among others~\cite{Huss09,Bens83,DuMe10,DQT10,Tess96,Zhou92}.  On the contrary, very few studies are devoted to
BSPDEs~\eqref{dtz6}-\eqref{TC} for the case of the \emph{degenerate parabolicity}:
\begin{equation}\label{deg}
   (A^{ij})\geq 0.
\end{equation}
 Zhou~\cite{Zhou92} seemed to be the first to study degenerate BSPDEs, but under the assumption that the matrix $\sigma=(\sigma^{ik})_{d\times d'}$
 vanishes. He ended up with commenting (see ~\cite[page 340]{Zhou92}) that
``it remains a challenging open problem to solve the adjoint equations of degenerate SPDEs in which diffusion terms contain first-order
differential operators"  (in a dually equivalent term, the adjoint equations as BSPDEs  contain the first order derivative of $q$, that is
$\sigma$ does not vanish). Seven years later, by introducing the following symmetry condition
\begin{equation}\label{sym}
    \sum_{k=1}^{d'}\sigma^{ik}{\partial \sigma^{jk}\over \partial x^l}=\sum_{k=1}^{d'}\sigma^{jk}{\partial \sigma^{ik}\over \partial x^l},
    \quad i,j,l=1,\ldots,d,
\end{equation}
Ma and Yong~\cite{MaYo99} made a first step towards solution of the open challenging problem. In particular, assuming that the coefficient
$\sigma$ is invariant with the state variable $x$, which certainly satisfies the symmetry condition~\eqref{sym},  Hu, Ma and Yong~\cite{HMY02}
further solved a semi-linear degenerate BSPDE for the case of $d=1$. Note that counterexamples to the symmetry condition~\eqref{sym} are many,
and here are three ones for the case of $d=d'=2$ and $x:=(x^1,x^2)\in \R^2$:
\begin{equation}\label{counterex}
\sigma (x) = \left(\begin{array}{cc} \sin(x^1+x^2)& \cos(x^1+x^2)\\
\cos(x^1+x^2)& -\sin(x^1+x^2) \end{array}\right),
\end{equation}
\begin{equation}\label{counterex}
 \sigma (x) = \left(\begin{array}{cc} {1\over \sqrt{1+|x|^2}}& 1\\
0& -{1\over \sqrt{1+|x|^2}} \end{array}\right),
\end{equation}
and
\begin{equation}\label{counterex}
\sigma (x) = \left(\begin{array}{cc} \sin(|x|)& \cos(|x|)\\
\cos(|x|)& -\sin(|x|) \end{array}\right) \quad \hbox{ \rm with } |x|:=\sqrt{(x^1)^2+(x^2)^2}.
\end{equation}

The preceding three works are basically restricted within the framework of the space $H^{m}=W^{m,2}$ with respect to the spatial variable
$x\in \R^d$, with the only exception that Hu, Ma and Yong~\cite[Theorem 3.2, page 394]{HMY02} includes an $L^p$ estimate for positive even
number $p$. Using the method of stochastic flows, imposing neither any symmetry condition on $\sigma$ nor any restriction on the dimension of
$\R^d$, Tang~\cite{Tang05} represented the solution of a system of semi-linear degenerate BSPDEs via the solutions of a family of FBSDEs and
then established the existence and uniqueness of classical solutions to semi-linear systems of degenerate BSPDEs, but at a cost of assuming
differentiability of higher orders in $x$ on the coefficients. Some $W^{m,p}$ estimates are also given via the method of stochastic flows in
Tang~\cite{Tang03} for $m=1,2$ and $p\ge 2$. The last two works strongly suggest that the symmetry condition~\eqref{sym} be superfluous.

In this paper, we obtain the existence, uniqueness and regularity of generalized (or weak) solutions of linear degenerate BSPDEs under a quite
general setting (see Theorem~\ref{thm:main}), in particular without imposing the symmetry condition~\eqref{sym}. Besides, we give a
$W^{m,p}$-estimate with $m\ge 1$ for any $p\geq2$. Our approach inherits  the  spirit of Krylov and Rozowskii~\cite{KrRo82} in the solution of
degenerate stochastic PDEs, and invokes  the real analysis lemma of Oleinik~\cite{Olei67} in the study of linear degenerate parabolic PDEs  so
as to derive a prior estimates for degenerate BSPDEs. However, we have to develop some innovations to their calculus. The fundamental estimate
of Krylov and Rozowskii~\cite[Lemma 2.1, page 339]{KrRo82} is based on computing the quantity
$$
\sum_{|\alpha|\le m}G\left(|D^\alpha u|^2\right)
$$
for the convex function $G(s):=s^p, s\ge 0$. Repetition of their calculus to our BSPDEs~\eqref{eq:trans} finds the following new and trouble
term:
$$
\sum_{|\alpha|\le m, \beta\le \alpha} D^\alpha u D^\beta r G'\left(|D^\alpha u|^2\right),
$$
which involves both unknowns $u$ and $r$ of our BSPDEs~\eqref{eq:trans}. Using Cauchy-Schwarz inequality, we only get the following
$$
2D^\alpha u D^\beta r G'\left(|D^\alpha u|^2\right)\le  G'\left(|D^\alpha u|^2\right) \left(\epsilon|D^\beta r|^2+\epsilon^{-1}|D^\alpha
u|^2\right), \quad \beta < \alpha,
$$
and it is difficult to see unless $p=1$ that it is dominated by the following
$$
\sum_{|\alpha|\le m} \left(|D^\alpha u|^2+ |D^\alpha r|^2\right) G'\left(|D^\alpha u|^2\right)
$$
due to the fact that $G'\left(|D^\alpha u|^2\right)$ varies with $\alpha$ for $p>1$.
 To overcome this difficulty, we instead compute the following
\begin{equation}
G\left(\sum_{|\alpha|\le m}|D^\alpha u|^2\right),
\end{equation}
which leads to the following term
$$
G'\left(\sum_{|\alpha|\le m}|D^\alpha u|^2\right)\sum_{|\alpha|\le m, \beta\le \alpha} D^\alpha u D^\beta r.
$$
Since the derivative $G'\left(\sum_{|\alpha|\le m}|D^\alpha u|^2\right)$ is invariant with $\alpha$ for any $p\ge 1$, the sum is dominated by
\begin{equation}
G'\left(\sum_{|\alpha|\le m}|D^\alpha u|^2\right)\sum_{|\alpha|\le m} \left(|D^\alpha u|^2+ |D^\alpha r|^2\right).
\end{equation}
In this way, we introduce a series of innovational computations and develop some new techniques. For example, a localization method and some
calculation techniques in application of It\^o formula (see Remark \ref{dtz5} (i)) are given to deal with the difficult and subtle degenerate
parabolicity condition.

As we know, BSPDEs arising from many applications are degenerately parabolic rather than being super-parabolic (see, e.g.
\cite{Peng92a,MaYo97,Tang98a,Tang98b,EnKa09}). Optimal control  of SPDEs is solved in Section 4, so as to illustrate the application of our
new result on degenerate parabolic BSPDE.


%

The rest of the paper is organized as follows. In Section 2, we introduce some notations and state our main result (Theorem \ref{thm:main}).
In Section 3, we prove the main theorem, after some preparations starting from the super-parabolic BSPDE with smooth coefficients. In Section
4, a maximum principle for optimal control of degenerate SPDEs is formulated  as an application of degenerate parabolic BSPDEs. In Sections 5
and 6, we give the proofs for two important lemmas which are used in the proof of Theorem~\ref{thm:main}.

\section{Preliminaries and the main result}

Let $d$ and $d'$ be two positive integers, and  $\R^d$ be $d$-dimensional Euclidean space whose squared norm  is denoted by $\|\cdot\|$.
Consider the following linear BSPDE
\begin{equation}\label{eq:main}
    du=-(a^{ij}u_{x^i x^j}+b^{i}u_{x^i}+cu+\sigma^{ik}q^{k}_{x^i}+\nu^k q^k +
    f)\,dt + q^k \,dW^k_t
\end{equation}
with the terminal condition
\begin{equation}\label{con:terminal}
  u(T)=\vf.
\end{equation}
Here,  the coefficients $a^{ij}(t,x)=a^{ji}(t,x)$, $b^{i}(t,x)$, $\sigma^{ik}(t,x), \nu^{k}(t,x)$ for $i,j=1,\dots,d$ and $k=1,\dots,d'$, and
$c(t,x), f(t,x)$ and $\vf(x)$ (as a rule, the argument $\omega$ is omitted), $(\omega,t,x)\in \Omega\times[0,T]\times\bR^d$, are given and
measurable. The summation convention is in force for repeated indices throughout this paper.

To define the solution of BSPDE \eqref{eq:main}-\eqref{con:terminal}, we need more notations.

$\bullet$ Denote $u_{x} = \nabla u = (u_{x^1},\cdots,u_{x^d})$ for $x\in \R^d$. For any multi-index $\alpha=(\alpha_1,\dots,\alpha_d)$, denote
$$D^{\alpha}=\bigg(\frac{\ptl}{\ptl x^1}\bigg)^{\alpha_1}
\bigg(\frac{\ptl}{\ptl x^2}\bigg)^{\alpha_2} \cdots \bigg(\frac{\ptl}{\ptl
x^d}\bigg)^{\alpha_d}$$ and $|\alpha|=\alpha_1+\cdots +\alpha_d$.

$\bullet$ Denote by $C_{0}^{\infty}$ and $C^n$ the set of infinitely
differentiable real functions of compact support defined on $\R^d$ and the
set of $n$ times continuously differentiable functions on $\R^d$ such that
$$\|u\|_{C^n} := \sum_{|\alpha|\leq n} \sup_{x\in \R^d} |D^{\alpha}u(x)| <
\infty.$$

$\bullet$ For $p>1$ and an integer $m\geq 0$, denote by
$W^{m,p}=W^{m,p}(\bR^d;\bR)$ the Sobolev space of real functions on
$\bR^d$ with a finite norm
$$\|u\|_{m,p}=\bigg(\sum_{|\alpha|\leq
m}\int_{\bR^d}|D^{\alpha}u|^{p}\,dx\bigg) ^{\frac{1}{p}},$$ where $\alpha$ is
a multi-index. In particular, $W^{0,p}=L^p$. It is well known that
$W^{m,2}$ is a Hilbert space and denote its inner product by $\la \cdot,\cdot
\ra_m$.

$\bullet$ For $p>1$ and integer $m\geq 0$, denote by $W^{m,p}(d')=W^{m,p}(\R^d;\R^{d'})$ the Sobolev space of $d'$-dimensional vector-valued
functions on $\bR^d$, equipped  with the norm $\|v\|_{m,p} = (\sum_{k=1}^{d'}\|v^{k}\|_{m,p}^{p})^{1/p}$.

$\bullet$ Denote by $L^{p}_{\sP}W^{m,p}$ (resp. $L^{p}_{\sP}W^{m,p}(d')$) the
space of all predictable process $u: \Omega\times[0,T]\rrow W^{m,p}$ (resp. $u:
\Omega\times[0,T]\rrow W^{m,p}(d')$) such that $u(\omega,t)\in W^{m,p}$ (resp.
$u(\omega,t)\in W^{m,p}(d')$) for a.e. $(\omega,t)$ and
$$\E\int_0^T\|u(t)\|_{m,p}^p \,dt <\infty.$$

$\bullet$ Denote by $L^{p}_{\sP}CW^{m,p}$ (resp. $L^{p}_{\sP}C_{w}W^{m,p}$)
the space of all predictable process $u: \Omega\times[0,T]\rrow W^{m,p}$
strongly (resp. weakly) continuous w.r.t. $t$ on $[0,T]$ for a.s.
$\omega$, such that
$$\E\sup_{t\leq T}\|u(t)\|_{m,p}^p < \infty.$$
\medskip

Given an integer $m \geq 1$, we shall use the following assumptions.

\medskip
($\A_m$)~ The given functions $a=(a^{ij})$, $b=(b^i)$, $c$, $\sigma=(\sigma^{ik})$ and $\nu=(\nu^{k})$ are $\mathscr{P} \times
B(\R^d)$-measurable with values in $\mathbb{S}^d$ (the set of real symmetric $d\times d$ matrices),
$\mathbb{R}^{d},\mathbb{R},\mathbb{R}^{d\times d'}$ and $\mathbb{R}^{d'}$, respectively. The coefficients $b^i,c,\nu^k$ and their derivatives
w.r.t. $x$ up to the order $m$, as well as $a^{ij}$ and $\sigma^{ik}$ up to the order $\max\{2,m\}$, are uniformly bounded by the positive
constant $K_m$.

\medskip
($\DP$)~ (\emph{parabolicity}) For each $(\omega,t,x)\in \Omega\times[0,T]\times\bR^d$,
$$\big[ 2 a^{ij}(t,x)-\sigma^{ik}\sigma^{jk}(t,x) \big]
\xi^i\xi^j\geq 0,~~\forall~\xi\in \bR^d.$$

\medskip
($\F_m$)~ The function $f\in L^{2}_{\sP}W^{m,2}$, the function $\vf$ is $\sF_T\times B(\bR^d)$-measurable with $\vf(\omega,\cdot)\in W^{m,2}$
for each $\omega$ and $\vf$ is $\sF_{T}$-measurable as a function on $\Omega$ with values in $W^{m,2}$.

\begin{defn}\label{def:sol}
  A pair of random fields $(u,q)\in L^{2}_{\sP}W^{1,2}\times
  L^{2}_{\sP}W^{0,2}(d')$ is called a generalized (or weak) solution of BSPDE
  \eqref{eq:main}-\eqref{con:terminal} if for each $\eta\in
  C^{\infty}_{0}$ and a.e. $t$,
  \begin{eqnarray}\label{eq:weaksol}\begin{split}
    \la u(t),\eta\ra_{0}=&\la\vf,\eta\ra_{0}+\int_{t}^{T}
    \Big[-\la a^{ij}(s)u_{x^i}(s)+\sigma^{jk}(s)q^{k}(s),\eta_{x^j}\ra_0\\
    &+\la (b^{i}(s)-a^{ij}_{x^j}(s))u_{x^i}(s)
    +c(s)u(s)
    +(\nu^k(s)-\sigma^{jk}_{x^j}(s))q^{k}(s)\\&+f(s),\eta\ra_0\Big]\,ds
    -\int_{t}^{T}\la q^{k}(s),\eta\ra_0 \,dW^{k}_{s}\ \ \ \bP\textrm{-a.s.}
    \end{split}
  \end{eqnarray}
\end{defn}


\begin{rmk}\label{rmk:2-1}
  If $(u,q)$ is a generalized solution of BSPDE
  \eqref{eq:main}-\eqref{con:terminal} and
  $(u,q) \in L^{2}_{\sP}W^{n,2} \times L^{2}_{\sP}W^{n-1,2}$,
  then $u$ has
  a modification $\widetilde{u}: \Omega\times[0,T]\rrow W^{n-1,2}$ which is
  continuous in $t$ for all $\omega$. Actually, this assertion
  is a backward version
  of Theorem 1.3.2 in \cite{KrRo81} and follows
  immediately from Lemma 3.1 in \cite{DuMe10}. For this, one needs
  take
  $H=W^{n-1,2}$, $V=W^{n,2}$ and $V^{*}=W^{n-2,2}$ in the referenced
  lemma and replace $\eta$ by $(1-\Delta)^{n-1}\eta$ in
\eqref{eq:weaksol} with $\langle\cdot,\cdot\rangle_0$ accordingly
changed into $\langle\cdot,\cdot\rangle_{n-1}$, due to the fact that
  $\la v,(1-\Delta)^{n-1}\eta \ra_{0} = \la v,\eta \ra_{n-1}
  ~~\textrm{for}~~v \in W^{n-1,2}$.
\end{rmk}
In the rest of this paper, we always take the continuous version of $u$.
\medskip

Now we state our main theorem.

\begin{thm}\label{thm:main}
  Let conditions \emph{($\A_m$)}, \emph{($\DP$)}, and
  \emph{($\F_m$)} be satisfied for given $m \geq 1$.
  Then BSPDE~\eqref{eq:main}-\eqref{con:terminal} has a unique generalized
  solution $(u,q)$ such that
    $$u \in L^{2}_{\sP}C_{w}W^{m,2}~~{\rm and}~~
    q + \nabla u \,\sigma \in L^{2}_{\sP}W^{m,2}(d'),$$ and
    for any integer $m_1 \in [0,m]$, we have the estimates
    \begin{eqnarray}\label{est:p=2}
      \nonumber \E \sup_{t \leq T} \|u(t)\|_{m_1,2}^{2}
      + \E\int_{0}^{T} \|(q + \nabla u\,\sigma)(t)\|_{m_1,2}^{2}\,dt
      ~~\\
      \leq~C \E\bigg( \|\vf\|_{m_1,2}^{2}
      + \int_{0}^{T} \|f(t)\|_{m_1,2}^{2}\,dt \bigg).
    \end{eqnarray}
    In addition, if
    $f \in L^{p}_{\sP}W^{m,p}$ and $\vf \in L^{p}(\Omega,\sF_{T},W^{m,p})$
    for $p \geq 2$, then $u \in L^{p}_{\sP}C_{w}W^{m,p}$, and
    for any integer $m_1 \in [0,m]$,
    \begin{equation}\label{est:p=p}
      \E \sup_{t \leq T} \|u(t)\|_{m_1,p}^{p}
      \leq C e^{Cp}\E\bigg( \|\vf\|_{m_1,p}^{p}
      + \int_{0}^{T} \|f(t)\|_{m_1,p}^{p}\,dt \bigg).
    \end{equation}
   Here and in the rest of this paper, $C$ is a
   generic constant which depends only on $d,d',K_m,m$ and $T$.
\end{thm}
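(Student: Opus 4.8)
The plan is to obtain the two estimates as \emph{a priori} bounds and then to realize the solution by regularization. First I would perturb the operator, replacing $a^{ij}$ by $a^{ij}+\eps\delta^{ij}$: for each $\eps>0$ the super-parabolicity~\eqref{SuperParab} holds, so a solution $(u_\eps,q_\eps)\in L^2_{\sP}CW^{m,2}\times L^2_{\sP}W^{m-1,2}(d')$ exists and is unique by the classical $L^2$-theory for nondegenerate BSPDEs (the backward Krylov--Rozovskii framework recalled in Remark~\ref{rmk:2-1}, cf.~\cite{DuMe10}). The real work is to make~\eqref{est:p=2} and~\eqref{est:p=p} \emph{uniform in $\eps$}, so that one may pass to the limit $\eps\downarrow0$.

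The crux is the choice of the ``good unknown''. Applying It\^o's formula to $\|u(t)\|_{0,2}^2$ and integrating by parts, the It\^o correction contributes $\int_{\R^d}\|q\|^2\,dx$ and the operator contributes $2\int a^{ij}u_{x^i}u_{x^j}\,dx+2\int\sigma^{ik}u_{x^i}q^k\,dx$ together with lower-order terms. Setting $r:=q+\nabla u\,\sigma$, i.e. $r^k=q^k+\sigma^{ik}u_{x^i}$, the completion of the square reads
\begin{equation}\label{eq:square}
\int_{\R^d}\!\Big(\|q\|^2+2\sigma^{ik}u_{x^i}q^k+2a^{ij}u_{x^i}u_{x^j}\Big)\,dx =\int_{\R^d}\!\Big(\|r\|^2+(2a^{ij}-\sigma^{ik}\sigma^{jk})u_{x^i}u_{x^j}\Big)\,dx,
\end{equation}
and by the parabolicity~(\DP) the last integrand is nonnegative. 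This is precisely why $r$, and not $q$ alone, is the quantity asserted to lie in $W^{m,2}(d')$: it absorbs the first-order-in-$q$ coupling that is uncontrollable in the degenerate regime. Feeding~\eqref{eq:square} into the energy balance and applying Gronwall's inequality gives the $m_1=0$ case of~\eqref{est:p=2}, uniformly in $\eps$; uniqueness for~\eqref{eq:main}--\eqref{con:terminal} then follows by applying the same estimate to the difference of two solutions with null data.

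For $m_1\ge1$ I would differentiate the equation, apply It\^o to $\sum_{|\alpha|\le m_1}\|D^\alpha u\|_{0,2}^2$, and repeat the completion of the square. The new feature is a collection of commutator terms in which derivatives fall on $a^{ij}$ and $\sigma^{ik}$; the dangerous ones are not absorbed by the degenerate quadratic form $(2a^{ij}-\sigma^{ik}\sigma^{jk})u_{x^i}u_{x^j}$. Here I would invoke Oleinik's lemma~\cite{Olei67}: a nonnegative form with bounded second derivatives has first derivatives pointwise dominated by a constant times its square root, so these commutators are controlled by a small multiple of the nonnegative form plus lower-order $L^2$ terms. Summation over $|\alpha|\le m_1$ and Gronwall then yield~\eqref{est:p=2} for every $m_1\le m$.

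The $L^p$ estimate uses the device flagged in the introduction. Instead of summing $G(|D^\alpha u|^2)$, I would apply It\^o's formula to $G\big(\sum_{|\alpha|\le m_1}|D^\alpha u|^2\big)$ with the convex $G(s)=s^{p/2}$, for which $\int_{\R^d}G\big(\sum_{|\alpha|\le m_1}|D^\alpha u|^2\big)\,dx$ is comparable to $\|u\|_{m_1,p}^p$. Since $G'\big(\sum_{|\alpha|\le m_1}|D^\alpha u|^2\big)$ is then a common scalar factor independent of $\alpha$, the troublesome cross term collapses into $G'(\cdot)\sum_{|\alpha|\le m_1}(|D^\alpha u|^2+|D^\alpha r|^2)$, which is handled exactly as in the $L^2$ case by completing the square under~(\DP) and bounding the commutators by Oleinik's lemma. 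The It\^o correction coming from $G''$ carries factors of order $p^2$ and produces the constant $e^{Cp}$ in~\eqref{est:p=p}, while the Burkholder--Davis--Gundy inequality turns the martingale part into the supremum over $t$. I expect the degenerate parabolicity to be the principal obstacle at every stage: wherever strict ellipticity would normally control $\nabla u$, one must instead rely on the square in $r$ together with Oleinik's square-root bound, and justify each It\^o computation on $\R^d$ by the localization/mollification of Remark~\ref{dtz5}(i). Finally, weak continuity $u\in L^p_{\sP}C_wW^{m,p}$ follows from the uniform bound in $L^\infty_tW^{m,p}$ and the strong continuity in $W^{m-1,2}$ from Remark~\ref{rmk:2-1}, and the limit $\eps\downarrow0$ is taken using weak compactness and lower semicontinuity of the norms.
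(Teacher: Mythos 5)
Your proposal is correct and follows essentially the same route as the paper: elliptic regularization $a^{ij}\mapsto a^{ij}+\eps\delta^{ij}$, the good unknown $r=q+\nabla u\,\sigma$ with completion of the square under the parabolicity condition ($\DP$), It\^o's formula applied to $G\big(\sum_{|\alpha|\le m}|D^\alpha u|^2\big)$ so that $G'$ is a common factor across multi-indices (the paper's $G(s)=s^{p}$ with final exponent $2p$ is your $s^{p/2}$ reparametrized), Oleinik's lemma for the commutator terms, BDG plus truncation/localization for the supremum and the rigorous It\^o computations, and weak limits with Fatou's lemma as $\eps\downarrow 0$, with uniqueness from the $L^2$ energy estimate. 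The only structural step the paper makes explicit that you compress into a remark is the intermediate stage of mollifying the coefficients (its Lemmas \ref{lem:sm.eq.est} and \ref{lem:superp.eq}), so that the pointwise It\^o calculus is first justified on smooth solutions before passing to merely bounded measurable coefficients.
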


\begin{rmk}
Theorem \ref{thm:main} asserts the existence and uniqueness of the solution of a degenerate BSPDE under quite general conditions, and improves
the relevant results of Zhou~\cite{Zhou92}, Ma and Yong~\cite{MaYo99} and Tang~\cite{Tang05}. It is a rather satisfactory solution of the
 open problem posed by~Zhou \cite{Zhou92}.
\end{rmk}

The proof of Theorem \ref{thm:main} is deferred to the next section. We now give some useful corollaries.
By Sobolev's imbedding theorem (see
e.g. \cite{Adam75}), we have

\begin{cor}\label{cor:001}
  Under conditions of Theorem \ref{thm:main}, \emph{(i)} if $mp > d$,
  then the first component $u$ is jointly continuous in $(t,x)$ a.s.;
  \emph{(ii)} if $2(m-2) > d$, then $(u,q)$ is a classical solution of BSPDE~\eqref{eq:main}-\eqref{con:terminal} which also reads
    \begin{eqnarray}\label{eq:2-001}
    \begin{split}
      u(t,x) ~=~& \vf(x) + \int_{t}^{T}
      \big(a^{ij}u_{x^i x^j}+b^{i}u_{x^i}+cu
      +\sigma^{ik}q^{k}_{x^i}+\nu^{k}q^{k}+f \big)(s,x)\,ds\\
      &-\int_{t}^{T} q^{k}(s,x)\,dW_{s}^{k}
    \end{split}
    \end{eqnarray}
    holds for all $x\in\R^d$, $t \in [0,T]$ on a $(t,x)$-independent set of the full measure.
\end{cor}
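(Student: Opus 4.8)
The plan is to extract the pointwise regularity of $(u,q)$ from the Sobolev memberships provided by Theorem~\ref{thm:main} and then promote the weak identity \eqref{eq:weaksol} to the classical equation \eqref{eq:2-001}. For part~(i), recall that $mp>d$ gives the Morrey embedding $W^{m,p}\hookrightarrow C^{0,\gamma}$ for some $\gamma\in(0,1)$; in particular the point evaluation $v\mapsto v(x)$ is a bounded linear functional on $W^{m,p}$. Since Theorem~\ref{thm:main} yields $u\in L^{p}_{\sP}C_{w}W^{m,p}$, the weak continuity of $t\mapsto u(t)$ in $W^{m,p}$ forces $t\mapsto u(t,x)$ to be continuous for each fixed $x$, a.s. On the other hand the embedding gives the uniform bound $|u(t,x)-u(t,y)|\le C\|u(t)\|_{m,p}\,|x-y|^{\gamma}\le C\big(\sup_{s\le T}\|u(s)\|_{m,p}\big)|x-y|^{\gamma}$, and $\sup_{s\le T}\|u(s)\|_{m,p}<\infty$ a.s. because $\E\sup_{s\le T}\|u(s)\|_{m,p}^{p}<\infty$. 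Thus the family $\{u(t,\cdot)\}_{t\le T}$ is equicontinuous, and equicontinuity together with continuity in $t$ at each fixed $x$ yields joint continuity of $u$ in $(t,x)$, a.s.

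For part~(ii) I would first record the pointwise regularity. When $2(m-2)>d$ one has $W^{m,2}\hookrightarrow C^{2}$ and $W^{m-1,2}\hookrightarrow C^{1}$. Since $u\in W^{m,2}$ gives $\nabla u\in W^{m-1,2}$, and $\sigma$ has derivatives up to order $m-1$ bounded by ($\A_m$), the product $\nabla u\,\sigma$ lies in $W^{m-1,2}$; hence $q=(q+\nabla u\,\sigma)-\nabla u\,\sigma\in W^{m-1,2}$, using $q+\nabla u\,\sigma\in L^{2}_{\sP}W^{m,2}(d')$ from Theorem~\ref{thm:main}. Consequently $u(t,\cdot)\in C^{2}$ for all $t$ and $q(t,\cdot)\in C^{1}$ for a.e. $t$, so $u_{x^{i}x^{j}}$ and $q^{k}_{x^{i}}$ are genuine continuous functions. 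Moreover, applying the argument of part~(i) to $D^{\alpha}u$, which is weakly continuous in $W^{m-|\alpha|,2}$ and, for $|\alpha|\le 2$, lands in a space embedding into $C^{0,\gamma}$, shows that $u$ together with all its spatial derivatives up to order $2$ is jointly continuous in $(t,x)$.

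It then remains to pass from \eqref{eq:weaksol} to \eqref{eq:2-001}. Because $u(t,\cdot)\in C^{2}$ and $q(t,\cdot)\in C^{1}$, I integrate by parts in the first inner product $-\la a^{ij}u_{x^{i}}+\sigma^{jk}q^{k},\eta_{x^{j}}\ra_{0}$ of \eqref{eq:weaksol}, transferring the $\ptl_{x^{j}}$ off the test function $\eta$. The derivatives $a^{ij}_{x^{j}}u_{x^{i}}$ and $\sigma^{jk}_{x^{j}}q^{k}$ produced this way cancel exactly against the terms $-a^{ij}_{x^{j}}u_{x^{i}}$ and $-\sigma^{jk}_{x^{j}}q^{k}$ already present in \eqref{eq:weaksol}, leaving the integrand $a^{ij}u_{x^{i}x^{j}}+b^{i}u_{x^{i}}+cu+\sigma^{ik}q^{k}_{x^{i}}+\nu^{k}q^{k}+f$ paired against $\eta$. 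Interchanging the spatial pairing with the $ds$- and $dW_{s}$-integrals by the ordinary and stochastic Fubini theorems (legitimate since $q\in L^{2}_{\sP}W^{0,2}(d')$ and $\eta$ has compact support), \eqref{eq:weaksol} becomes $\la R(t),\eta\ra_{0}=0$ for every $\eta\in C_{0}^{\infty}$, where $R(t,x)$ is precisely the difference of the two sides of \eqref{eq:2-001}. Hence $R(t,\cdot)=0$ for a.e. $x$, for each $t$, a.s.

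Finally I would upgrade this to a statement valid for all $(t,x)$ on a single null set. Rearranging, the It\^o integral $\int_{t}^{T}q^{k}(s,x)\,dW_{s}^{k}$ equals $\Phi(t,x):=\vf(x)-u(t,x)+\int_{t}^{T}(a^{ij}u_{x^{i}x^{j}}+\cdots+f)(s,x)\,ds$, which is jointly continuous in $(t,x)$ by the regularity established in part~(ii). For each $x$ in a countable dense set the backward stochastic integral admits a version continuous in $t$, so it agrees with $\Phi(\cdot,x)$ for all $t$, a.s.; intersecting over this countable set and invoking continuity of $\Phi$ together with $L^{2}$-continuity of the stochastic integral in $x$ (which follows from It\^o's isometry and the $C^{1}$-bound on $q$) extends the identity to all $(t,x)$ simultaneously. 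The main obstacle is exactly this last step: producing a jointly continuous modification of $\int_{t}^{T}q\,dW$ and a single exceptional null set, which is what converts the $x$-a.e., weak identity into the genuinely classical equation \eqref{eq:2-001}.
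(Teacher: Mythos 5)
Your proof is correct in substance but follows a genuinely different route from the paper's. The paper only proves assertion (ii) --- it treats (i) as immediate from Sobolev's embedding, which your weak-continuity-plus-equicontinuity argument fills in explicitly --- and it proves (ii) by mollification: taking the kernel $\eps^{-d}\zeta((x-\cdot)/\eps)$ as the test function in \eqref{eq:weaksol} (convolution being an inner product), it obtains the exact pointwise equation \eqref{eq:d1} for $(S_{\eps}u,S_{\eps}q)$, and then lets $\eps\downarrow 0$: the Lebesgue-integral terms converge by standard mollifier properties and the established continuity, while the stochastic term is controlled \emph{uniformly in $t$} by the BDG inequality combined with the embedding $W^{m-2,2}\hookrightarrow C^{0}$. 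You instead integrate by parts directly in \eqref{eq:weaksol} (correctly noting that the produced terms $a^{ij}_{x^j}u_{x^i}$ and $\sigma^{jk}_{x^j}q^{k}$ cancel against those built into Definition~\ref{def:sol}), invoke the stochastic Fubini theorem and the fundamental lemma of the calculus of variations to get the identity for a.e.\ $x$, and then upgrade to all $(t,x)$ using path continuity of stochastic integrals, a countable dense set of spatial points, and $L^{2}(\Omega)$-continuity in $x$ of the stochastic integral via It\^o's isometry and the $C^1$-bound on $q$. What the paper's route buys is that the passage to a single $(t,x)$-independent null set and the uniform-in-$t$ control come in one stroke from BDG applied to $S_\eps q - q$; what your route buys is a more elementary argument (no BDG), an explicit display of the cancellation structure of the weak formulation, and an actual proof of part (i). Two points you should tighten: the countable dense set of $x$'s in your last step must be chosen \emph{inside} the full-measure set of $x$ for which the a.e.-$x$ identity of your third step holds (this is possible, since a set of full Lebesgue measure is dense, but as written your dense set could in principle lie in the exceptional set); and the assertions ``for every $\eta\in C_0^\infty$'' and ``for each $t$'' require the usual upgrade via a countable family of test functions, rational times, and continuity in $t$ --- the same bookkeeping the paper itself leaves implicit.
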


\begin{proof}
  We only need to prove assertion (ii). In view of Theorem \ref{thm:main}
  and Sobolev's imbedding theorem, we can select appropriate versions such
  that $u(t,\cdot)\in C^2(\R^d)$ a.s., and
  $q(t,\cdot)\in C^1(\R^d), f(t,\cdot),\vf(\cdot)\in C^2(\R^d)$
  for a.e. $(\omega,t)$.
  Take a nonnegative function $\zeta \in
  C_{0}^{\infty}$
  such that $\int_{\R^d}\zeta dx =1$.
  For arbitrary $\eps > 0$, we define the operator $S_{\eps}$ by
  \begin{equation}\label{eq:S_eps}
    S_{\eps}h(x) = \eps^{-d} \zeta\left(\frac{x}{\eps}\right) \ast h(x)\ \ {\rm for}\ x\in\mathbb{R}^d.
  \end{equation}
  Since the convolution is representable as an inner product, 
  from the definition of generalized solutions we have that
  \begin{eqnarray}\label{eq:d1}
    \begin{split}
      S_{\eps}u(t,x) = & S_{\eps}\vf(x) + \int_{t}^{T}
      S_{\eps}\big(a^{ij}u_{x^i x^j}+b^{i}u_{x^i}+cu
      +\sigma^{ik}q^{k}_{x^i}+\nu^{k}+f \big)(s,x) \,ds\\
      &-\int_{t}^{T} S_{\eps}q^{k}(s,x)\,dW_{s}^{k}
    \end{split}
  \end{eqnarray}
  holds on a $(t,x)$-independent set of the full measure.
  Then in view of $q \in L^{2}_{\sP}W^{m-1,2}$, by the Burkholder-Davis-Gundy
  (BDG) inequality and Sobolev's theorem, we have
  \[
  \begin{split}
    &\lim_{\eps\downarrow 0} \E \sup_{t \leq T} \bigg|
    \int_{t}^{T} \big[ S_{\eps}q^{k}(s,x) - q^{k}(s,x) \big]\,dW_{s}^{k}
    \bigg|^{2}\\
    & \leq C \lim_{\eps\downarrow 0} \E
    \int_{0}^{T} \big\| S_{\eps}q(s,\cdot) - q (s,\cdot)\big\|_{C^0}^{2}\,ds\\
    & \leq C \lim_{\eps\downarrow 0} \E \int_{0}^{T}
    \big\| S_{\eps}q(s) - q(s) \big\|^{2}_{m-2,2}\,ds, \quad \forall x\in \R^d.
  \end{split}
  \]

  In virtue of the properties of an averaging operator, the last limit
  vanishes. Thus, assertion (ii) follows immediately by taking $\eps\downarrow 0$ in
  \eqref{eq:d1}.
\end{proof}

\begin{rmk} Corollary
\ref{cor:001} relaxes the conditions of  Tang~\cite{Tang05}. Our proof of Corollary \ref{cor:001} (ii) is  direct.
\end{rmk}

Moreover, Theorem \ref{thm:main} allows us to prove the boundedness of the
first component of the solution and its derivatives, provided the coefficient $f$, the terminal data $\xi$ and their derivatives are bounded.

\begin{cor}
  Let conditions
  \emph{($\A_m$)}, \emph{($\DP$)}, and
  \emph{($\F_m$)} with $m\geq 1$ be satisfied,
  \[f \in L^{\infty}_{\sP}W^{m,\infty},~~{\rm and}~~
  \vf \in L^{\infty}(\Omega,\sF_{T},W^{m,\infty}).\]
  Then BSPDE~\eqref{eq:main}-\eqref{con:terminal} has a unique generalized
  solution $(u,q)$ satisfying \eqref{est:p=2}. Moreover, for arbitrary
  multi-index $\alpha$ s.t. $|\alpha|\leq m$, it holds that
  \[|D^{\alpha}u(\omega,t,x)| \leq C\]
  for almost all $(\omega,t,x)$.
\end{cor}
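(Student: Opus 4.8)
The existence, uniqueness and the estimate \eqref{est:p=2} are immediate from Theorem~\ref{thm:main}, since hypothesis ($\F_m$) is assumed; the new content is the pointwise bound. The plan is to deduce it from the $L^p$ estimate \eqref{est:p=p} by letting $p\to\infty$. The one obstacle to applying \eqref{est:p=p} directly is that a function merely bounded on $\R^d$ need not lie in any $W^{m_1,p}=W^{m_1,p}(\R^d)$, so its right-hand side is typically infinite for $f\in L^\infty_\sP W^{m,\infty}$ and $\vf\in L^\infty(\Omega,\sF_T,W^{m,\infty})$. I would repair this by replacing Lebesgue measure by a finite weighted measure $\rho(x)\,dx$ and proving a weighted analogue of \eqref{est:p=p}.

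First I would fix a smooth positive weight, e.g. $\rho(x)=c_\eps e^{-\eps\sqrt{1+|x|^2}}$ normalized so that $\int_{\R^d}\rho\,dx=1$; its key feature is that $|D^\beta\rho|\le C_\beta\,\rho$ on $\R^d$ for every multi-index $\beta$, with $C_\beta$ small when $\eps$ is small. I would then revisit the a priori estimate underlying Theorem~\ref{thm:main} (the It\^o-formula computation of $G(\sum_{|\alpha|\le m_1}|D^\alpha u|^2)$ with $G(s)=s^{p/2}$ described in the Introduction), carried out along the approximation scheme of Section~3, but integrating in space against $\rho\,dx$ rather than $dx$. Each spatial integration by parts now produces extra terms in which a derivative falls on $\rho$; by $|D^\beta\rho|\le C_\beta\rho$ these are of lower differential order and are absorbed into the dominating terms, changing only the generic constant. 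Passing to the limit in the approximation (Fatou) this yields, for each integer $m_1\in[0,m]$ and each $p\ge2$,
\begin{equation*}
  \E\sup_{t\le T}\int_{\R^d}\sum_{|\alpha|\le m_1}|D^\alpha u(t,x)|^p\rho(x)\,dx
  \le Ce^{Cp}\,\E\Big(\int_{\R^d}\sum_{|\alpha|\le m_1}|D^\alpha\vf|^p\rho\,dx
  +\int_0^T\!\!\int_{\R^d}\sum_{|\alpha|\le m_1}|D^\alpha f|^p\rho\,dx\,dt\Big),
\end{equation*}
with $C$ still depending only on $d,d',K_m,m,T$ (and the fixed $\eps$).

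Let $M$ be the essential supremum of $|D^\alpha f|$ and $|D^\alpha\vf|$ over $|\alpha|\le m$ and all variables. Using $\int\rho=1$, the bounded-data hypotheses make the right-hand side at most $Ce^{Cp}M^p(1+T)N$, where $N$ is the number of multi-indices of length $\le m_1$. Fixing $\alpha$ and setting $Y_p:=\sup_{t\le T}\int_{\R^d}|D^\alpha u(t,x)|^p\rho(x)\,dx$, I get $\E Y_p\le Ce^{Cp}M^p(1+T)N$. Since $\rho\,dx$ is a probability measure, $p\mapsto(\int|g|^p\rho\,dx)^{1/p}$ is nondecreasing and tends to $\mathrm{ess\,sup}|g|$ (as $\rho>0$ a.e.); interchanging the two suprema then shows that, for a.s.\ $\omega$, $Y_p^{1/p}$ increases to $Z(\omega):=\mathrm{ess\,sup}_{(t,x)}|D^\alpha u(\omega,t,x)|$. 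Finally, for any $C'>C$, Chebyshev's inequality gives $\sum_p P(Y_p^{1/p}>e^{C'}M)\le(1+T)N\sum_p e^{-(C'-C)p}<\infty$, so by Borel--Cantelli and the monotonicity $Z\le e^{C'}M$ a.s.; letting $C'\downarrow C$ yields $|D^\alpha u|\le e^{C}M$ for a.e.\ $(\omega,t,x)$, which is the asserted bound (the stated constant absorbing $M$ and $e^{C}$).

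The main obstacle is the first step: running the energy/It\^o computation of Theorem~\ref{thm:main} with the weight $\rho$ and verifying that all weight-derivative remainders are genuinely absorbable and that the localization argument used there to handle the degenerate parabolicity is compatible with $\rho$, so that the constant keeps the form $Ce^{Cp}$ independent of $p$. One must also confirm that the weighted estimate is inherited by $u$ through the approximation procedure of Section~3 (the mollified and super-parabolically regularized data remain bounded by the same $M$). Once the weighted estimate is secured, the passage $p\to\infty$ is routine.
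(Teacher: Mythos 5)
The detour through weighted Sobolev spaces rests on a misreading of the hypotheses, and that is where the gap lies. The corollary assumes condition \emph{($\F_m$)} \emph{in addition to} the boundedness of the data, so $f$ and $\vf$, together with their derivatives up to order $m$, lie in $L^2\cap L^\infty$; by the elementary interpolation $\|g\|_{p}\le\|g\|_{2}^{2/p}\|g\|_{\infty}^{1-2/p}$, the quantity $N_p:=(\E\|\vf\|_{m,p}^{p})^{1/p}+(\E\int_0^T\|f(t)\|_{m,p}^{p}\,dt)^{1/p}$ is finite for every $p\ge2$ and satisfies $N_p\le N_2^{2/p}N_\infty^{1-2/p}\le N_2+N_\infty$ uniformly in $p$. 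Hence the unweighted estimate \eqref{est:p=p} applies exactly as it stands, and the paper's proof is precisely this: interpolate, apply \eqref{est:p=p}, and let $p\to\infty$, using that $(Ce^{Cp})^{1/p}$ stays bounded, to obtain $\|D^{\alpha}u\|_{L^{\infty}(\Omega\times[0,T]\times\R^d)}\le\sup_{p\ge2}\sup_{t\le T}\big(\E\|u(t)\|_{m,p}^{p}\big)^{1/p}\le C(N_2+N_\infty)$. The ``obstacle'' you set out to repair does not exist.

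Beyond being unnecessary, your replacement for that step is not actually a proof. The entire weight-carrying a priori estimate --- the weighted analogues of Lemma \ref{lem:basic}, Lemma \ref{lem:sm.eq.est} and Lemma \ref{lem:superp.eq}, together with their stability under the approximation scheme of Section 3 --- is only asserted, and you yourself flag it as ``the main obstacle.'' It is also not routine: integrating by parts against $\rho\,dx$ in the second-order terms of \eqref{formula:Theta} produces terms such as $\int G'(a^{ij}-2\alpha^{ij})D^{\beta}u\,D^{\beta}u_{x^i}\rho_{x^j}\,dx$, which contain derivatives of order $m+1$ that are not controlled by the quantity $J$; absorbing them forces you to exploit the nonnegativity of $A=(a^{ij}-\alpha^{ij})$ through inequalities of the type $|A\xi|^{2}\le\|A\|\,\xi^{T}A\xi$, and similar extra terms appear in the integrations by parts coupling $u$ and $r$ (the terms $R_2$, $R_3$ in Section 5). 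None of this is checked, and it is exactly the degenerate-parabolicity structure that makes it delicate. Since the final passage $p\to\infty$ is the easy part in either route, what you have is a program rather than a proof, for a statement that follows in a few lines from \eqref{est:p=p} and interpolation.
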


\begin{proof}
We only need to prove the last assertion. Set
\begin{center}$ N_p := (\E
\|\vf\|_{m,p}^{p})^{1/p} + (\E\int_{0}^{T} \|f(t)\|_{m,p}^{p}\,dt)^{1/p}.$
\end{center}
By interpolation, it holds that $f \in L^{p}_{\sP}W^{m,p}$ and $\vf \in
L^{p}(\Omega,\sF_{T},W^{m,p})$ for each $p\geq 2$, and
\[N_{p} \leq N^{2/p}_2 N^{1-2/p}_{\infty}\leq N_2 + N_\infty.\]
From Theorem \ref{thm:main} we have
\begin{eqnarray*}
\|D^{\alpha}u\|_{L^{\infty}(\Omega\times [0,T]\times\R^d)} \leq \sup_{p\geq
2}\bigg[\sup_{t\leq T} \Big(\E\|u(t)\|_{m,p}^{p}\Big)^{1/p}\bigg]\\
\leq \sup_{p\geq 2} C p^{2/p} N_{p} \leq C (N_2 + N_{\infty}).
\end{eqnarray*}
The proof is complete.
\end{proof}

\section{Proof of Theorem~\ref{thm:main}}

In this section, we  prove  Theorem~\ref{thm:main}. We begin with the following change of unknown variables in BSPDE~\eqref{eq:main}:
\[ r^{k} := q^{k} + \sigma^{ik}u_{x^i}.\]
Define
\begin{equation}\alpha^{ij} := \frac{1}{2}\sigma^{ik}\sigma^{jk} \quad \hbox{\rm and }
\quad \widetilde{b}^{i}:= b^{i} - \sigma^{ik}_{x^j} \sigma^{jk} - \nu^{k} \sigma^{ik}.
\end{equation} Then
BSPDE~\eqref{eq:main} can be rewritten into the following BSPDE:
\begin{eqnarray}\label{eq:trans}
  \begin{split}
    du = & - \big[ (a^{ij}-2\alpha^{ij}) u_{x^i x^j}
    + \widetilde{b}^{i} u_{x^i} + c u + \sigma^{ik} r^{k}_{x^i}
    + \nu^{k} r^{k} + f \big] \,dt \\
    & + ( r^{k} - \sigma^{ik}u_{x^i} ) \,dW^{k}_t,
  \end{split}
\end{eqnarray}
with the pair $(u,r)$ being the unknown.

Take $G(\cdot) \in C^{2}[0,\infty)$ such that $G(s),G'(s)>0$ and $G''(s) \geq
0$ for all $s\in (0,\infty)$. Applying It\^o's formula formally to compute
$G(\sum_{|\alpha| \leq m} |D^{\alpha} u|^{2})$, we immediately have
\begin{eqnarray}\label{eq:ito}
  \begin{split}
    d G\bigg(\sum_{|\alpha| \leq m}|D^{\alpha} u|^{2}& \bigg)
   =- \Theta(u,r,f; x,t,\omega) \,dt \\
    & +
    2 G'\bigg(\sum_{|\alpha| \leq m} |D^{\alpha} u|^{2} \bigg)
    \sum_{|\beta| \leq m}
    D^{\beta} u D^{\beta}( r^{k} - \sigma^{ik}u_{x^i} ) \,dW^{k}_t,
  \end{split}
\end{eqnarray}
where
\begin{eqnarray}\label{formula:Theta}
  \begin{split}
    & \Theta(u,r,f;x,t,\omega)\\
    & := 2G' \bigg( \sum_{|\alpha| \leq m} |D^{\alpha} u|^{2} \bigg)
    \sum_{|\beta| \leq m} D^{\beta}u  D^{\beta}
    \big[ (a^{ij}-2\alpha^{ij}) u_{x^i x^j}
    + \widetilde{b}^{i} u_{x^i} + c u \\
    & ~~~~ + \sigma^{ik} r^{k}_{x^i}
    + \nu^{k} r^{k} + f \big]
    - G' \bigg( \sum_{|\alpha| \leq m} |D^{\alpha} u|^{2} \bigg)
    \sum_{|\beta| \leq m} \|D^{\beta}
    ( r - \sigma^{i}u_{x^i})\|^{2}\\
    & ~~~~ - 2
    G''\bigg( \sum_{|\alpha| \leq m} |D^{\alpha} u|^{2} \bigg)
    \bigg\| \sum_{|\beta| \leq m} D^{\beta}u D^{\beta}
    (r-\sigma^{i}u_{x^i})\bigg\|^{2}.
  \end{split}
\end{eqnarray}

Note that the expression $\Theta(u,r,f;x,t,\omega)$ involves the leading coefficients  $a^{ij}$ and $\sigma^{ik}$ in our degenerate parabolic
equation. The following estimate turns out to be crucial in our arguments. In what follows, $\varepsilon$ is a generic constant which can be
chosen to be sufficiently small.

\begin{lem}\label{lem:basic}
  Let $m\geq 0,t\in [0,T]$ and $u \in W^{m+2,2},r\in W^{m+1,2}(d')$. For $u \in W^{m,2}$ and $r\in W^{m,2}(d')$, define
  \begin{eqnarray}
  \Psi := \sum_{|\alpha| \leq m} |D^{\alpha} u|^{2}
   \ and\ \Upsilon:=\sum_{|\alpha|\leq m}\|D^{\alpha} r\|^{2}.
   \end{eqnarray}
  Then under conditions \emph{($\A_m$)} and \emph{($\DP$)}, we have
  \begin{eqnarray}\label{est:basic}
  \begin{split}
    \int_{\bR^d} \Theta(u,r,f; x,t,\omega)\, dx
    \leq &-(1-\varepsilon)\int_{\bR^d}
    G'(\Psi)\Upsilon \,dx
    + {C\over\varepsilon} \int_{\R^d}\big[ G(\Psi) + G'(\Psi)\Psi \big] \,dx \\
    & + \sum_{|\beta| \leq m} \int_{\bR^d} G'(\Psi)|D^{\beta}f|^{2} \,dx,
  \end{split}
  \end{eqnarray}
provided that every integral in the right-hand side is finite.
\end{lem}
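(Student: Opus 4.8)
The plan is to compute $\int_{\R^d}\Theta\,dx$ directly, expanding every $D^\beta$ with $|\beta|\le m$ by the Leibniz rule and then integrating by parts in $x$ to lower the order of the derivatives falling on the unknowns. Throughout I abbreviate $\bar a^{ij}:=a^{ij}-2\alpha^{ij}$ and $A^{ij}:=a^{ij}-\alpha^{ij}=a^{ij}-\tfrac12\sigma^{ik}\sigma^{jk}$, so that condition $(\DP)$ reads $A^{ij}\xi^i\xi^j\ge0$, and I keep the $G'(\Psi)$-terms and the $G''(\Psi)$-terms bookkept separately. The identity $\sum_{|\beta|\le m}D^\beta u\,D^\beta u_{x^i}=\tfrac12\Psi_{x^i}$ will be used repeatedly.

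I first isolate the \emph{principal} terms, those in which all derivatives land on the leading unknown. One integration by parts turns $2G'(\Psi)\sum_\beta D^\beta u\,\bar a^{ij}D^\beta u_{x^ix^j}$ into the top-order quadratic $-2\int G'(\Psi)\bar a^{ij}D^\beta u_{x^i}D^\beta u_{x^j}$, while the bracket $-G'(\Psi)\|D^\beta(r-\sigma^i u_{x^i})\|^2$ supplies $-2\int G'(\Psi)\alpha^{ij}D^\beta u_{x^i}D^\beta u_{x^j}$; since $\bar a+\alpha=A$, these merge into $-2\int G'(\Psi)A^{ij}D^\beta u_{x^i}D^\beta u_{x^j}\le0$, which is exactly where $(\DP)$ enters. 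The crucial cancellation is that the top-order cross term $+2G'(\Psi)\sigma^{ik}D^\beta u_{x^i}D^\beta r^k$ from the same bracket is annihilated, after one integration by parts, by the principal part of $2G'(\Psi)\sum_\beta D^\beta u\,\sigma^{ik}D^\beta r^k_{x^i}$ --- this is precisely the point of the change of variable $r^k=q^k+\sigma^{ik}u_{x^i}$. The diagonal part of the bracket leaves $-\int G'(\Psi)\Upsilon$. Collecting next all $G''$-terms and again using $\sum_\beta D^\beta u\,D^\beta u_{x^i}=\tfrac12\Psi_{x^i}$, the second-order operator yields $-\int G''(\Psi)\bar a^{ij}\Psi_{x^i}\Psi_{x^j}$, the $\sigma r$-term cancels the cross part inside the explicit bracket $-2G''(\Psi)\|\sum_\beta D^\beta u\,D^\beta(r-\sigma^i u_{x^i})\|^2$, and the rest reassembles into $-\int G''(\Psi)\alpha^{ij}\Psi_{x^i}\Psi_{x^j}-2\int G''(\Psi)\|\sum_\beta D^\beta u\,D^\beta r\|^2$; by $\bar a+\alpha=A\ge0$ and $G''\ge0$ the whole $G''$ contribution is $\le0$ and may be discarded.

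It remains to bound the \emph{lower-order} terms, in which at least one derivative has landed on a coefficient. Those carrying a bounded coefficient and only derivatives of $u$ of order $\le m$ are dominated by $CG'(\Psi)\Psi$ through Young's inequality; those pairing a derivative of $u$ with a derivative of $r$ (both of order $\le m$) are split as $\varepsilon G'(\Psi)\Upsilon+C\varepsilon^{-1}G'(\Psi)\Psi$, the first being absorbed into the surviving $-\int G'(\Psi)\Upsilon$ to produce the factor $-(1-\varepsilon)$. The first-order drift $\widetilde b^i u_{x^i}$ and the lowest-order part of the second-order operator give terms of the form $\widetilde b^i\,(G(\Psi))_{x^i}$ and $\bar a^{ij}_{x^j}(G(\Psi))_{x^i}$, which after one further integration by parts are bounded by $C\int G(\Psi)$ (here the $C^2$-bounds on $a,\sigma$ and the boundedness of $\nabla b,\nabla\nu,c$ from $(\A_m)$ are used); finally $2G'(\Psi)D^\beta u\,D^\beta f\le G'(\Psi)\Psi+G'(\Psi)|D^\beta f|^2$ furnishes the last term on the right-hand side.

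The genuine difficulty, and the only place where plain Cauchy--Schwarz fails, is the family of order-$(m+1)$ commutator terms arising when exactly one derivative lands on the degenerate leading coefficients $a$ or $\sigma$, for instance $G'(\Psi)(D^{e_l}\sigma^{jk})\sigma^{ik}D^\beta u_{x^i}D^{\beta-e_l}u_{x^j}$ and its counterpart from the second-order operator. Each such term carries a derivative of $u$ of order $m+1$, which $\Psi$ does not control, and it cannot be absorbed into $-2G'(\Psi)A^{ij}D^\beta u_{x^i}D^\beta u_{x^j}$ by Cauchy--Schwarz because $A$ is only nonnegative. The plan is first to exploit the partial cancellation between the quadratic-variation commutators and the second-order-operator commutators --- both being built from $\sigma$ --- and then to control the residue by Oleinik's lemma \cite{Olei67}: since $A\ge0$ and $a,\sigma\in C^2$ with bounds from $(\A_m)$, one gets estimates of the type $|\nabla A|\le C\,(A^{ij}\xi^i\xi^j)^{1/2}$ bounding the first derivatives of the degenerate coefficients by the square root of the coefficient itself, which is exactly what dominates the residual order-$(m+1)$ terms by $\varepsilon\,G'(\Psi)A^{ij}D^\beta u_{x^i}D^\beta u_{x^j}+C\varepsilon^{-1}G'(\Psi)\Psi$ so that the unwanted derivative disappears into the good negative term and into $G'(\Psi)\Psi$. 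I expect this last step --- the accounting of the order-$(m+1)$ commutators and their treatment via Oleinik's lemma --- to be the main obstacle; the remainder is routine integration by parts and Young's inequality, after which all contributions add up to \eqref{est:basic}.
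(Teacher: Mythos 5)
Your proposal follows essentially the same route as the paper's proof: the same integration-by-parts bookkeeping that merges $\bar a^{ij}=a^{ij}-2\alpha^{ij}$ and $\alpha^{ij}$ into $A^{ij}$, the same cancellation of the top-order $\sigma$-cross terms produced by the change of variables $r=q+\nabla u\,\sigma$, the same sign argument discarding the $G''$-contributions, and---at the decisive point---the same application of Oleinik's lemma to dominate the order-$(m+1)$ commutator terms $D^{\beta_1}A^{ij}D^{\beta_2}u_{x^ix^j}$ so that they are absorbed, via Young's inequality, into the good negative quadratic form $-2\int G'A^{ij}D^{\beta}u_{x^i}D^{\beta}u_{x^j}\,dx$ and into $C\varepsilon^{-1}G'(\Psi)\Psi$. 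The paper's Section 5 executes exactly this plan, so your outline is correct and matches it in every essential respect.
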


With the aid of Lemma \ref{lem:basic}, we are able to first establish, under the super-parabolic condition, the existence and uniqueness of
the $W^{m,p}$ solution ($p\geq2$) to BSPDEs with smooth coefficients. For this purpose, consider the following two assumptions.

\medskip
($\SP$)~ (\emph{super-parabolicity}) There is a constant $\delta > 0$ such that the matrix $$ 2a-\sigma \sigma^* \geq \delta I_{d}.$$

\medskip
(\B)~ The coefficients $a^{ij}, b^{i}, c, \sigma^{ik}, \nu^{k}$ are infinitely differentiable in $x$ for all $(\omega,t)$, and their
derivatives of orders up to $n$ are dominated by a constant depending on $n$. Moreover, we assume that
\[ f \in \bigcap_{m\in \mathbb{Z}_{+}}L^{2}_{\sP}W^{m,2},
~~~~\vf \in \bigcap_{m\in \mathbb{Z}_{+}} L^{2}(\Omega,\sF_{T},W^{m,2}).\] We
always choose an appropriate modification of $f$ such that for all $t$, $\E
\|f(t)\|_{m,2}^2 < \infty$ for all $m \in \mathbb{Z}_+$.

\medskip
We have the following lemma.

\begin{lem}\label{lem:sm.eq.est}
  Assume that $m \geq 0$, $p \geq 1$, $f \in L^{2p}_{\sP}W^{m,2p}$,
  and $\vf \in L^{2p}(\Omega,\sF_{T},W^{m,2p})$.
  Then under conditions \emph{($\A_m$)},
  \emph{($\SP$)}, and \emph{($\B$)},
  BSPDE~\eqref{eq:main}-\eqref{con:terminal} has
  a unique generalized solution $(u,q)$ such that
  \begin{equation}
    u \in L^{2p}_{\sP}CW^{m,2p}\quad \hbox{\rm and }\quad
    \Psi^{\frac{p-1}{2}}
    \Upsilon^{\frac{1}{2}}
    \in L^{2}_{\sP}W^{0,2},
  \end{equation}
  with
  \begin{equation}
\Psi := \sum_{|\alpha|\leq m}|D^{\alpha}u|^{2}\quad \hbox{ \rm and }\quad  \Upsilon:=\sum_{|\alpha|\leq m}\|D^{\alpha} r\|^{2}
\end{equation}
for $r := q+u_x\sigma$. Moreover, we have
  \begin{eqnarray}\label{est:sm.eq}
    \begin{split}
      \E \sup_{t\leq T}\int_{\R^d}\Psi^{p}\,dx
      + \E \int_{0}^{T}\!\!\!\int_{\R^d}
      \Psi^{p-1}\Upsilon \,dx~~~~~~~~\\
      \leq C{\rm e}^{Cp}\E\bigg(\|\vf\|^{2p}_{m,2p}
      + \int_{0}^{T}\|f(t)\|^{2p}_{m,2p} \,dt \bigg),
    \end{split}
  \end{eqnarray}
where the constant $C$ is independent of $\delta$ and $p$.
\end{lem}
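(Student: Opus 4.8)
The plan is to produce $(u,q)$ first from the standard $L^2$-theory and then to upgrade it to the quantitative bound \eqref{est:sm.eq} by an energy computation built on the It\^o formula \eqref{eq:ito} with $G(s)=s^p$. Under \emph{($\SP$)} and \emph{($\B$)} the equation is super-parabolic with smooth coefficients and, by \emph{($\B$)}, the data $\vf,f$ lie in every $W^{n,2}$; hence for each $n$ the Krylov--Rozovskii-type theory of super-parabolic BSPDEs yields a unique generalized solution $(u,q)\in L^2_{\sP}CW^{n,2}\times L^2_{\sP}W^{n-1,2}$, so that (taking the continuous version from Remark~\ref{rmk:2-1}) $u$ is as regular as needed and $r:=q+u_x\sigma$ solves \eqref{eq:trans}. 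It therefore remains to establish \eqref{est:sm.eq}; the two memberships $u\in L^{2p}_{\sP}CW^{m,2p}$ and $\Psi^{(p-1)/2}\Upsilon^{1/2}\in L^2_{\sP}W^{0,2}$ are then read off directly from its two left-hand terms, and uniqueness is inherited from the $L^2$-theory.

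For $G(s)=s^p$ ($p\ge1$) one has $G'(s)=ps^{p-1}$ and $G''(s)=p(p-1)s^{p-2}\ge0$; when $1<p<2$ I would first run the argument with the regularized $G(s)=(s+\eps)^p$ so as to apply Lemma~\ref{lem:basic} legitimately, and let $\eps\downarrow0$ at the end. Integrating \eqref{eq:ito} over $\R^d\times[t,T]$ gives
\[
\int_{\R^d}\Psi(t)^p\,dx=\int_{\R^d}\Psi(T)^p\,dx+\int_t^T\!\!\int_{\R^d}\Theta\,dx\,ds-\int_t^T M^k_s\,dW^k_s,
\]
with $M^k_s=\int_{\R^d}2p\,\Psi^{p-1}\sum_{|\beta|\le m}D^\beta u\,D^\beta(r^k-\sigma^{ik}u_{x^i})\,dx$. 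Now I invoke Lemma~\ref{lem:basic}; crucially it is stated only under \emph{($\A_m$)} and \emph{($\DP$)}, so the resulting constant is independent of $\delta$. Since $G'(\Psi)\Psi=p\Psi^p$, estimate \eqref{est:basic} becomes
\[
\int_{\R^d}\Theta\,dx\le-(1-\eps)p\!\int_{\R^d}\Psi^{p-1}\Upsilon\,dx+\frac{Cp}{\eps}\!\int_{\R^d}\Psi^p\,dx+p\!\int_{\R^d}\Psi^{p-1}\!\!\sum_{|\beta|\le m}|D^\beta f|^2\,dx,
\]
and Young's inequality converts the $f$-term into $C\int_{\R^d}\Psi^p\,dx+C^p\|f\|_{m,2p}^{2p}$, where the combinatorial factor from $(\sum_{|\beta|\le m}|D^\beta f|^2)^p\le N^{p-1}\sum_{|\beta|\le m}|D^\beta f|^{2p}$ and the constant $C^p$ are absorbed into $\mathrm e^{Cp}$. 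Killing the stochastic integral in expectation by a localizing sequence of stopping times and dropping the good $\Upsilon$-term, the function $\phi(t):=\E\int_{\R^d}\Psi(t)^p\,dx$ obeys a backward Gronwall inequality, whence $\sup_{t\le T}\phi(t)\le C\mathrm e^{Cp}\,\E(\|\vf\|_{m,2p}^{2p}+\int_0^T\|f\|_{m,2p}^{2p}\,dt)$; reinserting this and keeping the dropped term yields the same bound for $\E\int_0^T\!\int_{\R^d}\Psi^{p-1}\Upsilon\,dx\,dt$.

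The step I expect to be the main obstacle is moving the supremum \emph{inside} the expectation, i.e. the Burkholder--Davis--Gundy estimate of $\sup_t|\int_t^T M^k\,dW^k|$, because $D^\beta(\sigma^{ik}u_{x^i})$ in $M^k$ carries $(m{+}1)$-st order derivatives of $u$ that are not controlled by $\Psi$. The resolution I would use is to split $D^\beta(r^k-\sigma^{ik}u_{x^i})=D^\beta r^k-D^\beta(\sigma^{ik}u_{x^i})$. The $r^k$-part is handled by Cauchy--Schwarz (first in $\beta$, then in $x$), giving $\sum_k(M^k_{(1)})^2\le Cp^2\big(\int_{\R^d}\Psi^p\,dx\big)\big(\int_{\R^d}\Psi^{p-1}\Upsilon\,dx\big)$. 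In the $\sigma^{ik}u_{x^i}$-part the top-order contribution telescopes: since $\sum_{|\beta|\le m}D^\beta u\,\sigma^{ik}\partial_{x^i}D^\beta u=\tfrac12\sigma^{ik}\partial_{x^i}\Psi$ and $p\,\Psi^{p-1}\partial_{x^i}\Psi=\partial_{x^i}\Psi^p$, one integration by parts (using the boundedness of $\sigma$ and its derivatives from \emph{($\A_m$)}) reduces it, together with the lower-order Leibniz terms, to $\le C\int_{\R^d}\Psi^p\,dx$; hence $\sum_k(M^k_{(2)})^2\le C\big(\int_{\R^d}\Psi^p\,dx\big)^2$. Combining these, applying BDG and then Young's inequality absorbs $\tfrac12\,\E\sup_t\int_{\R^d}\Psi^p\,dx$ into the left-hand side, the factor $p^2$ is absorbed into $\mathrm e^{Cp}$, and the remaining terms are bounded by the two a priori estimates of the previous paragraph. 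Together with $\E\int_{\R^d}\Psi(T)^p\,dx\le C^p\,\E\|\vf\|_{m,2p}^{2p}$ this gives \eqref{est:sm.eq}, completing the proof.
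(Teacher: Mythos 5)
Your overall strategy coincides with the paper's: It\^o's formula applied to $G(\Psi)$, Lemma~\ref{lem:basic} (whose constant is indeed $\delta$-independent), a Gronwall-type absorption for the drift, and a BDG step in which the dangerous $(m{+}1)$-st order term $\sigma^{ik}D^{\beta}u_{x^i}$ is tamed by the telescoping identity $\sum_{|\beta|\le m}D^{\beta}u\,\sigma^{ik}D^{\beta}u_{x^i}=\tfrac12\sigma^{ik}\Psi_{x^i}$ plus one integration by parts --- this last point is exactly the paper's treatment of the martingale term in its Step 2, and you identified it correctly. The genuine gap is in how you make the computation rigorous. Every absorption you perform (Gronwall on $\phi(t)=\E\int_{\R^d}\Psi(t)^p\,dx$, and later moving $\tfrac12\,\E\sup_{t}\int_{\R^d}\Psi^p\,dx$ to the left-hand side) presupposes that these quantities are \emph{finite}, and a priori they need not be: conditions ($\SP$), ($\B$) and the $L^2$-theory give only $u\in\bigcap_{n}L^{2}_{\sP}CW^{n,2}$, i.e.\ second moments of Sobolev norms, whereas $\E\int_{\R^d}\Psi^p\,dx$ is a $2p$-th moment of such a norm; nothing in the hypotheses bounds it before the estimate is proved. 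Your proposed fix --- a localizing sequence of stopping times --- is precisely the device the paper points out (Remark~\ref{dtz5}(i)) does \emph{not} work for backward equations: stopping at $\tau_R$ replaces the known terminal datum $\vf$ by the uncontrolled random variable $u(\tau_R)$, so the Gronwall inequality never closes, and removing the localization ($\tau_R\to T$) requires exactly the uniform integrability you are trying to establish. The same finiteness issue undermines the stochastic Fubini exchange needed to pass from the pointwise It\^o formula to its integral over $\R^d$.

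The paper's resolution is a double truncation in the function space rather than in time: $|D^{\alpha}u|^2$ is replaced by $H_M(D^{\alpha}u)$ ($C^{1,1}$, of linear growth) and $s^p$ by $G_{N,p}(s)$ ($C^{1,1}$, linear for $s\ge N$), so that $G_{N,p}(\Psi_M)\le pN^{p-1}\Psi$ has finite expectation by the $L^2$-theory and the martingale integrand is bounded; all identities are then legitimate, one takes expectations, lets $M\to\infty$, then $N\to\infty$ by monotone convergence, with the inequalities $sG_{N,p}'(s)\le pG_{N,p}(s)$ and $|G_{N,p}'(s)|^{p/(p-1)}\le p^{p/(p-1)}G_{N,p}(s)$ keeping every constant of the admissible form $Ce^{Cp}$. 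A secondary flaw in your write-up: for $1<p<2$ the regularization $G(s)=(s+\eps)^p$ makes $\int_{\R^d}G(\Psi)\,dx=\infty$ (since $G(0)=\eps^p>0$ and the integral is over all of $\R^d$), so Lemma~\ref{lem:basic} is vacuous there --- its right-hand side is infinite; you would need at least $G(s)=(s+\eps)^p-\eps^p$. These are not cosmetic points: the truncation scheme is the main technical content of the paper's proof of this lemma.
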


\begin{rmk}\label{dtz5}
(i) The proofs of Lemmas \ref{lem:basic} and \ref{lem:sm.eq.est} are both quite technical and lengthy, and thus they are deferred to Sections
5 and 6, respectively.  We would like to stress here that they play a key role in the proof of Theorem~\ref{thm:main} and many calculation
techniques are developed in their own proofs. For example, we apply It\^o's formula in Lemma~\ref{lem:basic} to the expression
$$
  G\left(\sum_{|\alpha|\leq m}|D^{\alpha}u|^2\right)
$$
rather than to the  expression $G\left(|D^{\alpha}u|^2\right)$ for each multi-index $\alpha$ which is conventionally used in the study of
$W^{m,p}$-theory of degenerate SPDEs (c.f. \cite{KrRo82,Rozo90}).
It is difficult to derive a $W^{m,p}$-estimate ($p\geq2$) of solutions to BSPDEs by computing the expression $G\left(|D^{\alpha}u|^2\right)$
for each multi-index $\alpha$, even under the super-parabolicity condition, we have to deal with the new terms  $G'D^{\alpha}uD^{\beta}r$ with
$\alpha$ and $\beta$ being different multi-indices, and they turn out to be a trouble (see our more detailed exposition in the introduction).
For another example, $G$ is taken specifically for a localization method in the proof of Lemma~\ref{lem:sm.eq.est} when applying It$\hat {\rm
o}$'s formula to the solution of BSPDE. Though the stopping time arguments are conventionally used to get $W^{m,p}$-estimate of solutions to
SPDEs (c.f. \cite{KrRo77,Rozo90}), the localization method is conventionally used for BSPDEs and backward stochastic equations as well
(see~\cite{ZZ10}, where the localization method is used to get the $L^{p}(dx)$ solutions of backward doubly stochastic differential
equations).\smallskip

(ii) Treating the sum $q+\nabla u \sigma$ as a unity in our approach is a natural convenience of technical calculations, since it appears in
the diffusion term of a BSDE (see e.g., \cite{MaYo97,Tang05}).

(iii) By the equivalence of norms, \eqref{est:sm.eq} implies
  \begin{equation*}
      \E \sup_{t \leq T} \|u(t)\|_{m,2p}^{2p}
      \leq Ce^{Cp} \E \bigg( \|\vf\|_{m,2p}^{2p}
      + \int_{0}^{T} \|f(t)\|_{m,2p}^{2p}\,dt \bigg)
  \end{equation*}
with the same constant $C$ as in \eqref{est:sm.eq}.
\end{rmk}

Next we deal with the super-parabolic BSPDE with general coefficients.
\begin{lem}\label{lem:superp.eq}
  Assume that $m \geq 0$, $p \geq 2$ and conditions \emph{($\A_m$)},
  \emph{($\SP$)} and \emph{($\F_m$)} are satisfied. If
  $f \in L^{p}_{\sP}W^{m,p}$ and $\vf \in L^{p}(\Omega,\sF_{T},W^{m,p})$,
  then BSPDE~\eqref{eq:main}-\eqref{con:terminal} has
  a unique generalized solution $(u,q)$ satisfying
    $$u \in L^{2}_{\sP}W^{m+1,2} \cap L^{2}_{\sP}CW^{m,2}
    \cap L^{p}_{\sP}C_{w}W^{m,p}\ and\ q \in L^{2}_{\sP}W^{m,2}.$$
  Moreover,
    \begin{eqnarray}\label{est:superp.eq.2}
      \begin{split}
      \E \sup_{t \leq T} \|u(t)\|_{m,2}^{2}
      + \E\int_{0}^{T} \|(q+u_x\sigma)(t)\|_{m,2}^{2}\,dt~~~~~~\\
      \leq C \E \bigg( \|\vf\|_{m,2}^{2}
      + \int_{0}^{T} \|f(t)\|_{m,2}^{2}\,dt \bigg)
      \end{split}
    \end{eqnarray}
    and
    \begin{equation}\label{est:superp.eq.p}
      \E \sup_{t \leq T} \|u(t)\|_{m,p}^{p}
      \leq Ce^{Cp} \E \bigg( \|\vf\|_{m,p}^{p}
      + \int_{0}^{T} \|f(t)\|_{m,p}^{p}\,dt \bigg),
    \end{equation}
    where the constant $C$ is independent of
    $\delta$ and $p$.
\end{lem}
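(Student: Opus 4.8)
The plan is to remove the smoothness hypothesis ($\B$) from Lemma~\ref{lem:sm.eq.est} by a mollification-and-passage-to-the-limit argument, exploiting the fact that the a priori bounds furnished by Lemma~\ref{lem:sm.eq.est} (and Remark~\ref{dtz5}(iii)) depend on the coefficients only through the bound $K_m$ of ($\A_m$) and are \emph{independent of} $\delta$ and of the smoothing parameter. Since the equation is linear, existence will follow from weak compactness of the approximate solutions and a limit passage in the weak formulation~\eqref{eq:weaksol}, while uniqueness will follow from the $L^2$-estimate applied to the difference of two solutions.

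First I would regularize the coefficients. Let $S_\eps$ denote the mollification operator in $x$ (as in \eqref{eq:S_eps}) and set $\sigma^\eps := S_\eps\sigma$, $b^\eps:=S_\eps b$, $c^\eps:=S_\eps c$, $\nu^\eps:=S_\eps\nu$, and $a^\eps := S_\eps A + \tfrac12\sigma^\eps(\sigma^\eps)^*$ where $A:=a-\tfrac12\sigma\sigma^*$. Because mollification is an average, $S_\eps A\ge \tfrac{\delta}{2}I_d$ is preserved, so $2a^\eps-\sigma^\eps(\sigma^\eps)^* = 2S_\eps A \ge \delta I_d$ and ($\SP$) holds for the smoothed coefficients with the \emph{same} $\delta$; moreover $a^\eps,\sigma^\eps,b^\eps,\dots$ and their $x$-derivatives up to the relevant order stay bounded by a fixed multiple of $K_m$, so ($\A_m$) holds uniformly in $\eps$ and ($\B$) now holds. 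I would likewise take smooth data $f^\eps:=S_\eps f$, $\vf^\eps:=S_\eps\vf$, which lie in every $W^{m',2}$ and satisfy $f^\eps\to f$, $\vf^\eps\to\vf$ in $L^2_{\sP}W^{m,2}$ as well as in $L^p_{\sP}W^{m,p}$, with norms dominated by those of $f,\vf$. Applying Lemma~\ref{lem:sm.eq.est} with its exponent equal to $p/2\ge 1$ (so that $2\cdot(p/2)=p$) yields a generalized solution $(u^\eps,q^\eps)$ with $r^\eps:=q^\eps+u^\eps_x\sigma^\eps$, and Remark~\ref{dtz5}(iii) together with the $p=2$ case supplies
\begin{align*}
\E\sup_{t\le T}\|u^\eps(t)\|_{m,2}^2+\E\int_0^T\|r^\eps(t)\|_{m,2}^2\,dt &\le C\,\E\Big(\|\vf\|_{m,2}^2+\int_0^T\|f(t)\|_{m,2}^2\,dt\Big),\\
\E\sup_{t\le T}\|u^\eps(t)\|_{m,p}^p &\le C\mathrm{e}^{Cp}\,\E\Big(\|\vf\|_{m,p}^p+\int_0^T\|f(t)\|_{m,p}^p\,dt\Big),
\end{align*}
with $C$ independent of $\eps,\delta,p$. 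The additional membership $u^\eps\in L^2_{\sP}W^{m+1,2}$, with a $\delta$-dependent bound uniform in $\eps$, I would extract from the super-parabolic energy estimate, in which the coercive term $(a^\eps-\tfrac12\sigma^\eps(\sigma^\eps)^*)u^\eps_x\cdot u^\eps_x\ge\tfrac{\delta}{2}|u^\eps_x|^2$ controls one extra derivative.

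Next, using these uniform bounds and reflexivity, I would pass to a subsequence along which $u^\eps\rightharpoonup u$ weakly in $L^2_{\sP}W^{m+1,2}$ (and weakly-$*$ in $L^p$), $r^\eps\rightharpoonup r$ weakly in $L^2_{\sP}W^{m,2}(d')$, and $q^\eps\rightharpoonup q$ weakly in $L^2_{\sP}W^{0,2}(d')$. Since $\sigma^\eps\to\sigma$, $a^\eps\to a$, etc. converge strongly (uniformly, together with their $x$-derivatives), every coefficient-times-unknown product converges weakly, e.g. $a^{ij,\eps}u^\eps_{x^i}\rightharpoonup a^{ij}u_{x^i}$ and $\sigma^{jk,\eps}q^{k,\eps}\rightharpoonup\sigma^{jk}q^k$, so in particular $u^\eps_x\sigma^\eps\rightharpoonup u_x\sigma$ and hence $r=q+u_x\sigma$. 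Because the It\^o integral $v\mapsto\int_t^T\la v^k,\eta\ra_0\,dW^k_s$ is a bounded, hence weakly continuous, linear map on $L^2_{\sP}W^{0,2}(d')$, the stochastic term also passes to the limit. Writing \eqref{eq:weaksol} for $(u^\eps,q^\eps)$ against a fixed $\eta\in C^\infty_0$ and an arbitrary bounded $\sF_t$-measurable weight and letting $\eps\downarrow0$, I would conclude that $(u,q)$ satisfies \eqref{eq:weaksol}, i.e. it is a generalized solution in the required class.

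Finally, \eqref{est:superp.eq.2}--\eqref{est:superp.eq.p} follow from the displayed bounds by weak lower semicontinuity of the norms (using $\vf^\eps\to\vf$, $f^\eps\to f$). Strong continuity $u\in L^2_{\sP}CW^{m,2}$ follows from Remark~\ref{rmk:2-1} with $n=m+1$, since $(u,q)\in L^2_{\sP}W^{m+1,2}\times L^2_{\sP}W^{m,2}$, together with $\E\sup_t\|u\|_{m,2}^2<\infty$; the weak continuity $u\in L^p_{\sP}C_wW^{m,p}$ then follows from $W^{m,p}$-boundedness plus continuity in the weaker space $W^{m,2}$. Uniqueness reduces, by linearity, to showing that a solution with $\vf=0,f=0$ vanishes, which is immediate from \eqref{est:superp.eq.2}, the latter being derived for solutions in the class via Lemma~\ref{lem:basic} and an It\^o/mollification argument. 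The main obstacle I anticipate is the first step: designing the smoothing so that super-parabolicity is preserved with a \emph{uniform} $\delta$ while all derivative bounds remain controlled by $K_m$, and justifying the limit passage in the bilinear coefficient-times-unknown terms with only weak convergence of $(u^\eps,q^\eps)$ in hand; the $\delta$-independence of the constants in \eqref{est:superp.eq.2}--\eqref{est:superp.eq.p}, which is exactly what will make these bounds usable for the later degenerate limit $\delta\downarrow0$ in Theorem~\ref{thm:main}, must be tracked carefully throughout.
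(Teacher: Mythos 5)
Your overall architecture (mollify the coefficients so that ($\SP$) survives with the same $\delta$, solve the smoothed problems by Lemma~\ref{lem:sm.eq.est} with exponent $p/2$, then pass to the limit in the weak formulation) is close in spirit to the paper's, and several of your steps are sound: mollification does preserve super-parabolicity with the same $\delta$ (even mollifying $a$ and $\sigma$ directly, since $S_{\eps}(\sigma\sigma^{*})\geq (S_{\eps}\sigma)(S_{\eps}\sigma)^{*}$ by Jensen's inequality, which is what the paper's assertion (i) tacitly uses, so your modified smoothing of $A=a-\tfrac12\sigma\sigma^{*}$ is unnecessary but harmless), and the limit passage in \eqref{eq:weaksol} using uniform convergence of coefficients and weak continuity of the stochastic integral is exactly the argument the paper uses in the proof of Theorem~\ref{thm:main}. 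The structural difference is that the paper does not construct the solution as a weak limit at all: it quotes Du and Meng~\cite{DuMe10} (whose $W^{n}_{2}$-theory requires no smoothness of the coefficients) to obtain existence, uniqueness and the class $u\in L^{2}_{\sP}W^{m+1,2}\cap L^{2}_{\sP}CW^{m,2}$, $q\in L^{2}_{\sP}W^{m,2}$ in one stroke, together with the $\delta$-dependent bound \eqref{ieq:4.2-1}, and uses the mollified problems only to transfer the $\delta$-independent estimates to this already-existing solution.

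The genuine gap is your final step, namely that \eqref{est:superp.eq.2}--\eqref{est:superp.eq.p} ``follow by weak lower semicontinuity of the norms.'' The left-hand sides are $\E\sup_{t\leq T}$-functionals, and these are not weakly lower semicontinuous --- indeed not even well defined --- on the spaces $L^{2}_{\sP}W^{m+1,2}$ and $L^{2}_{\sP}W^{m,2}(d')$ in which you have weak convergence, because elements there are equivalence classes modulo $d\bP\times dt$-null sets and a supremum over \emph{all} $t$ is not determined by the class. The same defect blocks your claim $u\in L^{p}_{\sP}C_{w}W^{m,p}$, which needs $\sup_{t\leq T}\|u(t)\|_{m,p}<\infty$ a.s.\ for the chosen continuous version. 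What the paper supplies, and your proposal lacks, is a \emph{strong} convergence statement: writing the equation satisfied by $\widetilde{u}_{n}=u_{n}-u$ with the source $\widetilde{f}_{n}$ that collects the coefficient differences multiplied by derivatives of $u$ and $q$, and applying the $\delta$-dependent super-parabolic estimate \eqref{ieq:4.2-1} to it, the paper obtains $\E\sup_{t\leq T}\|u_{n}(t)-u(t)\|_{m,2}^{2}\rightarrow 0$. This gives convergence of $\la D^{\alpha}u_{n}(t),\eta\ra_{0}$ at every fixed $t$, which is precisely what makes the countable-dense-times duality argument \eqref{ieq:4.2-4}, Fatou's lemma, and the weak-continuity conclusion go through. (Alternatively, as in the paper's proof of Theorem~\ref{thm:main} for the degenerate limit, one can use the Banach--Saks theorem to replace weak limits by strongly convergent convex combinations and run the same duality argument.) Without one of these devices, your limit passage yields only the time-integrated bounds such as $\E\int_{0}^{T}\|r(t)\|_{m,2}^{2}\,dt$, not the $\E\sup_{t\leq T}$ bounds, so the central estimates and both continuity assertions of the lemma remain unproved.
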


\begin{proof}
In view of Du and Meng~\cite[Theorem 2.3]{DuMe10} and Remark \ref{rmk:2-1}, BSPDE~\eqref{eq:main}-\eqref{con:terminal} has a unique
generalized solution $(u,q)$ satisfying
  $$u \in L^{2}_{\sP}W^{m+1,2} \cap L^{2}_{\sP}CW^{m,2},~~~~
  q \in L^{2}_{\sP}W^{m,2}.$$
Moreover,
\begin{eqnarray}\label{ieq:4.2-1}
  \begin{split}
      \E \sup_{t \leq T} \|u(t)\|_{m,2}^{2}
      + \E\int_{0}^{T} \Big[ \|u(t)\|_{m+1,2}^{2}
      + \|q(t)\|_{m,2}^{2}\Big]\,dt~~~~~~\\
      \leq C(\delta) \E \bigg( \|\vf\|_{m,2}^{2}
      + \int_{0}^{T} \|f(t)\|_{m-1,2}^{2}\,dt \bigg).
  \end{split}
\end{eqnarray}

Applying the averaging operator $S_{\eps}$ defined by \eqref{eq:S_eps} to the functions  $h = a^{ij},b^i,c,\sigma^{ik}$ and $\nu^k$, we set
\[
h_{n} := S_{\frac{1}{n}}h, \quad f_{n} := S_{\frac{1}{n}}f, \quad \vf_{n}: = S_{\frac{1}{n}}\vf.
\]
In view of the properties of the averaging operator $S_{\eps}$, we have the
following assertions:
\begin{enumerate}
  \item[(i)]
  the functions $a_{n}^{ij},b_{n}^i,c_{n},\sigma_{n}^{ik}$ and $\nu_{n}^k$
  satisfy the conditions of Lemma \ref{lem:sm.eq.est}
  for all $n$ with the same constants $K_m$ and $\delta$;
  \item[(ii)]
  for $h = a^{ij},b^i,c,\sigma^{ik}$, $\nu^k$ and arbitrary
  multi-index $\alpha$, we have
  $$ D^{\alpha}h_{n}(\omega,t,x) \rightarrow D^{\alpha}h(\omega,t,x),
  ~~~~\textrm{as}~ n \rightarrow \infty$$
  uniformly w.r.t. $(\omega,t,x)$;
  \item[(iii)]
  for each $n$, both functions $f_{n}$ and $\vf_{n}$ satisfy
  the conditions of Lemma \ref{lem:sm.eq.est}, and for $p'=2$ or $p$, we have
  \begin{equation}
    \lim_{n\rightarrow \infty} \E \bigg[ \|\vf_{n}-\vf\|_{m,p'}^{p'}
    + \int_{0}^{T} \|f_{n}(t)-f(t)\|_{m,p'}^{p'}\,dt \bigg] = 0.
  \end{equation}
\end{enumerate}

Consider the following BSPDE
\begin{numcases}{}\label{eq:4.2-2}
    du_{n}=-(a_{n}^{ij}u_{n,x^i x^j}+b_{n}^{i}u_{n,x^i}
    +c_{n}u_{n}+\sigma_{n}^{ik}q^{k}_{n,x^i}+\nu_{n}^k q_{n}^k +
    f_{n})\,dt + q_{n}^k \,dW^k_t,\nonumber\\
    u_{n}(T) = \vf_{n}.
\end{numcases}
The existence and uniqueness of the solution of (\ref{eq:4.2-2}) is indicated by Lemma \ref{lem:sm.eq.est}. Moreover, in view of  Lemma
\ref{lem:sm.eq.est} (for $p=1$), we have
\begin{eqnarray}\label{ieq:4.2-3}
    \begin{split}
      \E \sup_{t \leq T} \|u_{n}(t)\|_{m,2}^{2}
      + \E\int_{0}^{T} \|q_{n}(t)+\sigma_{n}^{i}u_{n,x^i}(t)\|_{m,2}^{2}dt~~~~~~\\
      \leq C \E \bigg( \|\vf_{n}\|_{m,2}^{2}
      + \int_{0}^{T} \|f_{n}(t)\|_{m,2}^{2}\,dt \bigg),
    \end{split}
\end{eqnarray}
where the constant $C$ is independent of 
$\delta$ and $n$. To see this, set
\begin{gather*}
  \widetilde{u}_{n} := u_{n}-u,\quad \widetilde{q}_{n}: = q_{n}-q,
  \quad \widetilde{\vf}_{n} := \vf_{n}-\vf, \\
  \widetilde{f}_{n} := (a_{n}^{ij}-a^{ij})u_{x^ix^j} + (b_{n}^{i}-b^{i})u_{x^i}
  + (c_{n}-c)u \\
  + (\sigma_{n}^{ik}-\sigma^{ik})q^{k}_{x^i}
  + (\nu_{n}^{k}-\nu^{k})q^{k} + f_{n}-f,
\end{gather*}
then clearly $\widetilde{f}_{n} \in L^{2}_{\sP}W^{m-1,2}$. By the dominated convergence theorem, we have
\begin{equation*}
  \lim_{n\rightarrow \infty} \E \bigg[ \|\widetilde{\vf}_{n}\|_{m,2}^{2}
  + \int_{0}^{T} \|\widetilde{f}_{n}(t)\|_{m-1,2}^{2}\,dt \bigg] = 0.
\end{equation*}
Noting that $(\widetilde{u}_{n},\widetilde{q}_{n})$ is a generalized solution
of BSPDE
\begin{numcases}{}
    d\widetilde{u}_{n}=-(a_{n}^{ij}\widetilde{u}_{n,x^i x^j}
    +b_{n}^{i}\widetilde{u}_{n,x^i}
    +c_{n}\widetilde{u}_{n}+\sigma_{n}^{ik}\widetilde{q}^{k}_{n,x^i}
    +\nu_{n}^k \widetilde{q}_{n}^k +
    \widetilde{f}_{n})\,dt + \widetilde{q}_{n}^k \,dW^k_t\nonumber\\
    \widetilde{u}_{n}(T)=\widetilde{\vf}_{n}\nonumber
\end{numcases}
and applying
a similar estimate as \eqref{ieq:4.2-1}, we get
\begin{equation*}
  \lim_{n\rightarrow \infty} \E \bigg\{ \sup_{t \leq T}
  \|\widetilde{u}_{n}(t)\|_{m,2}^{2}
  + \int_{0}^{T} \Big[ \|\widetilde{u}_{n}(t)\|_{m+1,2}^{2}
  + \|\widetilde{q}_{n}(t)\|_{m,2}^{2}\Big]\,dt \bigg\} = 0.
\end{equation*}
Passing $n\rightarrow \infty$ in \eqref{ieq:4.2-3},
\eqref{est:superp.eq.2} follows.

Next we need to prove that $u \in L^{p}_{\sP}C_{w}W^{m,p}$ and to deduce \eqref{est:superp.eq.p}. We have known that $u_{n},u \in
L^{2}_{\sP}CW^{m,2}$ and
  $\lim_{n\rightarrow \infty} E \sup_{t\leq T} \| u_{n}(t) - u(t)
  \|_{m,2}^{2} = 0$.
Hence for arbitrary countable set $\{t_i\}\subset[0,T]$, $\eta_j \in
C^{\infty}_{0}$ and multi-index $\alpha$ with $|\alpha| \leq m$, we have
\begin{eqnarray}\label{ieq:4.2-4}
    \la D^{\alpha}u(t_i),\eta_{j} \ra_{0}
    = \lim_{n'\rrow \infty} \la D^{\alpha}u_{n'}(t_i),\eta_{j} \ra_{0}
    \leq \lim_{n'\rrow \infty} \|u_{n'}(t_i)\|_{m,p}
    \|\eta_{j}\|_{\frac{p}{p-1}}
\end{eqnarray}
on a full-measure set, where $\{n'\}$ is an appropriate subsequence of $\{n\}$.

Since $u \in L^{2}_{\sP}CW^{m,2}$, it is easy to see that the left-hand side
of \eqref{ieq:4.2-4} is continuous at $t_{i}$ for each $\eta_j$ on a
full-measure set. Let $t_{i}$ run through the rational points of $[0,T]$ and
$\eta_j $ run through a dense subset of the unit sphere of
$L^{\frac{p}{p-1}}(\R^d)$. Then by \eqref{ieq:4.2-4} we obtain that for
arbitrary $t\in[0,T]$ and multi-index $\alpha$ with $|\alpha| \leq m$,
\begin{eqnarray*}
  \begin{split}
    &\|D^{\alpha}u(t)\|_{0,p}
    = \sup_{j}\la D^{\alpha}u(t),\eta_{j} \ra_{0}\\
    &\leq \sup_{j}\sup_{i}\la D^{\alpha}u(t_i),\eta_{j} \ra_{0}
    \leq \varliminf_{n'\rrow \infty}\sup_{t\leq T}\|u_{n'}(t)\|_{m,p}\ \ \ a.s.
  \end{split}
\end{eqnarray*}
On the other hand, by Lemma \ref{lem:sm.eq.est} we know that
\begin{equation}\label{ieq:4.2-5}
  \E \sup_{t \leq T} \|u_{n'}(t)\|_{m,p}^{p}
  \leq C{\rm e}^{Cp}\E \bigg( \|\vf_{n'}\|_{m,p}^{p}
  + \int_{0}^{T} \|f_{n'}(t)\|_{m,p}^{p}\,dt \bigg).
\end{equation}
Hence, in view of assertion (iii) in this proof and Fatou's lemma, we obtain
\eqref{est:superp.eq.p}. This estimate, along with the continuity of the
left-hand side of \eqref{ieq:4.2-4} at $t_i$ on a full-measure set and the
separability of $L^{\frac{p}{p-1}}(\R^d)$, yields that $u(t)\in W^{m,p}$ for
all $t$ and $u(t)$ is weakly continuous in $W^{m,p}$ w.r.t. $t$ on a
full-measure set. The proof is complete.
\end{proof}

\begin{rmk}
It is worth noting that  the $W^{m,p}$ estimate for $p\geq2$ of Du, Qiu, and Tang~\cite{DQT10} requires the super-parabolicity condition
($\SP$), and  the generic constant $C$ depends on $\delta$ of Condition ($\SP$).  Our estimates~(\ref{est:superp.eq.2}) and
(\ref{est:superp.eq.p}) do not require such a dependence, but at a cost of losing the $W^{m+1,p}$ estimate.
\end{rmk}

We are now in a position to prove Theorem~\ref{thm:main}.

\begin{proof}[Proof of Theorem \ref{thm:main}]
\emph{Uniqueness.} We need to prove that if $f=0$, $\vf=0$ and $(u,q)$ is a generalized solution of
BSPDE~\eqref{eq:main}-\eqref{con:terminal}, then $u=0$ and $q=0$ a.e. $(\omega,t)\in\Omega\times[0,T]$. Recalling Remark \ref{rmk:2-1}, we
always choose the continuous modification of $u$ and apply It\^o's formula to $e^{Lt}\|u(t)\|_{0,2}^{2}$. Using Lemma \ref{lem:basic} in the
case that $G(s)=s^2$ and $m=0$, we can easily get
  $$ e^{Lt}\|u(t)\|_{0,2}^{2}
  \leq (C-L)\int_{t}^{T} e^{Ls}\|u(s)\|_{0,2}^{2}\,ds + M_{T}-M_t,$$
  where $M_{t}, t\in [0,T],$ is a continuous martingale with $M_0 = 0$ and the
  constant $C$ comes from Lemma \ref{lem:basic}. By taking $L=C$, we have
  $0 \leq e^{Ct}\|u(t)\|_{0,2}^{2} \leq M_{T}-M_t$, which implies that
  $u \equiv 0$ and then $q \equiv 0$. The uniqueness is proved.
\medskip

\noindent\emph{Existence.} Take $\eps > 0$ and consider the following
BSPDE
\begin{numcases}{}\label{eq:eps}
  du^{\eps}=-(\eps \Delta u^{\eps} + a^{ij}u^{\eps}_{x^i x^j}+b^{i}u^{\eps}_{x^i}
  +cu^{\eps}+\sigma^{ik}q^{\eps,k}_{x^i}+\nu^k q^{\eps,k} +
  f)\,dt + q^{\eps,k} \,dW^k_t,\nonumber\\
  u^{\eps}(T)=\vf.
\end{numcases}
It is clear that BSPDE \eqref{eq:eps} satisfies the conditions of Lemma
\ref{lem:superp.eq}, thus \eqref{eq:eps} has a unique generalized solution
$(u^{\eps},q^{\eps})$ satisfying all the assertions in Lemma
\ref{lem:superp.eq}. Set $r^{\eps} = q^{\eps} + \sigma^{i}u^{\eps}_{x^i}$,
then we have
\begin{eqnarray*}
  \E \sup_{t \leq T} \|u^{\eps}(t)\|_{m,2}^{2}
  + \E\int_{0}^{T} \|r^{\eps}(t)\|_{m,2}^{2}\,dt
  \leq C \E \bigg( \|\vf\|_{m,2}^{2}
  + \int_{0}^{T} \|f(t)\|_{m,2}^{2}\,dt \bigg),
\end{eqnarray*}
where the constant $C$ is independent of $\eps$.

Therefore, we can find a sequence $\eps_n \downarrow 0$ and
$$(u,r) \in L^{2}_{\sP}W^{m,2} \times L^{2}_{\sP}W^{m,2}(d')$$
such that $(u^{\eps_n},r^{\eps_n})$ converges weakly in $L^{2}_{\sP}W^{m,2}
\times L^{2}_{\sP}W^{m,2}(d')$ to $(u,r)$ as $n \rrow \infty$. By the resonance
theorem, we have
\begin{eqnarray}\label{ieq:5-1}
  \E\int_{0}^{T} \|r(t)\|_{m,2}^{2}\,dt
  \leq C \E \bigg( \|\vf\|_{m,2}^{2}
  + \int_{0}^{T} \|f(t)\|_{m,2}^{2}\,dt \bigg).
\end{eqnarray}

If setting $q = r - u_x\sigma$, we know that $q^{\eps_n}$ converges weakly in $L^{2}_{\sP}W^{m-1,2}(d')$ to $q$. Therefore, it is obvious that
for any $\eta \in C^{\infty}_{0}$, all terms on both sides of \eqref{eq:weaksol} with $(u,q)$ replaced by $(u^{\eps_n},q^{\eps_n})$, converge
weakly in $L^{2}_{\sP}(\Omega\times[0,T])$ to the corresponding terms for $(u,q)$ as $\eps_n \downarrow 0$. Since the operators of integration
and stochastic integration are continuous in $L^{2}_{\sP}(\Omega\times[0,T])$, they are weakly continuous and thus $(u,q)$ is a generalized
solution of \eqref{eq:main}-\eqref{con:terminal}. Furthermore, noting Remark 2.1, we can always take $u \in L^{2}_{\sP}CW^{m-1,2}$.
\medskip

Since $u^{\eps_n}$ converges weakly in $L^{2}_{\sP}W^{m,2}$ to $u$ as $n \rrow \infty$, by Banach-Saks theorem we construct a sequence $u^k$
from finite convex combinations of $u^{\eps_n}$ such that $u^k$ converges strongly to $u$ in $W^{m,2}$ for a.e. $t\in[0,T]$ a.s. Hence for
some countably dense set $\{t_i\}\subset[0,T]$, any $\eta_j \in C^{\infty}_{0}$ and $|\alpha| \leq m$, we have
\begin{eqnarray}\label{ieq:5-2}
  \begin{split}
    (-1)^{|\alpha|}\la u(t_i),D^{\alpha}\eta_{j} \ra_{0}
    =\la D^{\alpha}u(t_i),\eta_{j} \ra_{0}
    = \lim_{k\rrow \infty} \la D^{\alpha}u^k(t_i),\eta_{j} \ra_{0}\\
    \leq \lim_{k\rrow \infty} \|u^k(t_i)\|_{m,p}
    \|\eta_{j}\|_{\frac{p}{p-1}}
  \end{split}
\end{eqnarray}
on a full-measure set. Since $u \in L^{2}_{\sP}CW^{m-1,2}$, we know that for
any $\eta_j$, the first term of \eqref{ieq:5-2} is continuous at $t_{i}$ on a
full-measure set. Allow $\eta_j \in C^{\infty}_{0}$ to run through a dense
subset of the unit sphere of $L^{\frac{p}{p-1}}(\R^d)$, by \eqref{ieq:5-2} we
obtain that for $|\alpha| \leq m$,
\begin{eqnarray*}
  \begin{split}
    &\|D^{\alpha}u(t)\|_{0,p}
    = \sup_{j}\la D^{\alpha}u(t),\eta_{j} \ra_{0}\\
    &\leq \sup_{j}\sup_{i}\la D^{\alpha}u(t_i),\eta_{j} \ra_{0}
    \leq \varliminf_{k\rrow \infty}\sup_{t\leq T}\|u^k(t)\|_{m,p}\ \ \ a.s.
  \end{split}
\end{eqnarray*}
and for all $t$,
  \[\sup_{t\leq T}\|D^{\alpha}u(t)\|_{0,p}
  \leq \varliminf_{k\rrow \infty}\sup_{t\leq T}\|u^k(t)\|_{m,p}\ \ \ a.s.\]
On the other hand, noting that $u^{\eps_n}$ satisfies
\eqref{est:superp.eq.p}, by Jensen's inequality we have
\begin{equation*}
  \E \sup_{t \leq T} \|u^k(t)\|_{m,p}^{p}
  \leq C e^{Cp} \E\bigg( \|\vf\|_{m,p}^{p}
  + \int_{0}^{T} \|f(t)\|_{m,p}^{p}\,dt \bigg).
\end{equation*}
Therefore, \eqref{est:p=p} follows from Fatou's lemma. Similarly, we can establish the same estimate for $p=2$ which, along with
\eqref{ieq:5-1}, yields \eqref{est:p=2}. Using a similar argument as in the end of the proof of Lemma \ref{lem:superp.eq}, we can deduce $u
\in L^{2}_{\sP}C_{w}W^{m,2} \cap L^{p}_{\sP}C_{w}W^{m,p}$. The proof is complete.
\end{proof}

%
%
%

\section{Application: maximum principle for optimal control
of degenerate SPDEs}

One  main application of BSPDEs is to formulate the maximum principle for optimal controls of SPDEs (c.f. \cite{Bens83,Tang98a,Zhou93}). Since
Theorem~\ref{thm:main} does not require the super-parabolic condition, we give an example to illustrate the application in stochastic control
theory.\medskip

Let $\Gamma$ be a non-empty Borel set in some Euclidean space. For $x\in\R^d$
and $\phi\in C^\infty_0$, we define two differential operators
$\{\mathcal{L}(t,v):t\in[0,T],v\in\Gamma\}$ and
$\{\mathcal{M}^k(t,v):t\in[0,T],v\in\Gamma,k=1,2,\cdot\cdot\cdot,d'\}$ as follows:
\begin{eqnarray}
  \begin{split}
    \mathcal{L}(t,v)\phi(x)&:=[a^{ij}(t,x,v)\phi_{x^j}(x)]_{x^i}
    +b^i(t,x,v)\phi_{x^i}(x)+c(t,x,v)\phi(x),\\
    \mathcal{M}^k(t,v)\phi(x)&:=\sigma^{ik}(t,x,v)\phi_{x^i}(x)+v^k(t,x,v)\phi(x),
  \end{split}
\end{eqnarray}
where $a^{ij},b^i,c,\sigma^{ik}$ and $v^k$ are given real valued functions
defined in $[0,T]\times\R^d\times\Gamma$, $i,j=1,2,\cdot\cdot\cdot,d$ and
$k=1,2,\cdot\cdot\cdot,d'$. In the mean time, we write down the adjoint
operators of $\mathcal{L}(t,v)$ and $\mathcal{M}^k(t,v)$:
\begin{eqnarray}
  \begin{split}
    \mathcal{L}^*(t,v)\phi(x)&:=[a^{ij}(t,x,v)\phi_{x^j}(x)]_{x^i}
    -[b^i(t,x,v)\phi(x)]_{x^i}+c(t,x,v)\phi(x)\\
    {\cM^k}^*(t,v)\phi(x)&:=-[\sigma^{ik}(t,x,v)\phi(x)]_{x^i}+v^k(t,x,v) \phi(x).
  \end{split}
\end{eqnarray}
We denote by $\mathscr{V}_{ad}$ the totality of admissible controls which are $\Gamma$-valued, $\mathscr{F}$-adapted processes $\{V(t), 0\leq
t\leq T\}$.
\medskip

\noindent \textbf{Problem.} Given $V(\cdot)\in \mathscr{V}_{ad}$, we consider the following controlled linear SPDE (with the sate variable $x$
being omitted):
\begin{equation}\label{dtz1}
  \left\{\begin{array}{rcl}
    d\xi(t)&=&[\mathcal{L}(t,V(t))\xi(t)+F(t,V(t))]\,dt\\
    &&~+[{\cM^k}(t,V(t))\xi(t)
    +G^k(t,V(t))]\,dW_t^k,\ \ \ \ t\in[0,T];\\
    \xi(0)&=&\xi_0.
  \end{array}\right.
\end{equation}
The optimal control problem is to find $V(\cdot)\in \mathscr{V}_{ad}$ which
minimizes the cost functional below:
\begin{eqnarray}\label{dtz2}
J(V):=\E\bigg\{\int_0^T\langle f(t,V(t)),\xi^V(t)\rangle_0\,dt+\langle
\varphi,\xi^V(T)\rangle_0\bigg\},
\end{eqnarray}
where $f:[0,T]\times\Gamma\rrow W^{1,2}$ and $\varphi\in W^{1,2}$ are given.
\medskip

A process $\xi=\xi^V\in L^2_\mathscr{P}W^{1,2}$ is called a (generalized)
solution of SPDE \eqref{dtz1} for the control $V$ if, for each $\eta\in
C_0^\infty$ and a.e. $t\in[0,T]$,
\begin{eqnarray*}
&&\langle\xi(t),\eta\rangle_0=\langle\xi_0,\eta\rangle_0
+\int_0^t\langle\mathcal{L}(s,V(s))\xi(s)+F(s,V(s)),\eta\rangle_0\,ds\nonumber\\
&&\ \ \ \ \ \ \ \ \ \ \ \ \ \ \ \ +\int_0^t\langle{\cM^k}(s,V(s))\xi(s)+G^k(s,V(s)),\eta\rangle_0\,dW_s^k, \quad \bP\textrm{-a.s.}
\end{eqnarray*}

For this problem, we make the following hypotheses.

$\bullet$~ The functions
$a^{ij},b^i,c,\sigma^{ik},\nu^k:[0,T]\times\R^d\times\Gamma\rrow \R$ are
measurable in $(t,x,v)$ and continuous in $v$; $a^{ij},\sigma^{ik}$ and their
derivatives w.r.t. $x$ up to second order, as well as $b^i,c,\nu^k$ and their
first order derivatives, do not exceed a constant $K_1$ in absolute value.

$\bullet$~ (\emph{parabolicity}) For each $(t,x,v)\in
[0,T]\times\bR^d\times\Gamma$, the matrix
$$(2 a^{ij}-\sigma^{ik}\sigma^{jk})_{d \times d} \geq 0.$$

$\bullet$~ The function $f(t,v)\in W^{1,2}$ for each $(t,v)$, the function
$\vf\in W^{1,2}$ and $$\|f(t,v)\|_{1,2}+\|\varphi\|_{1,2}\leq K_1.$$

$\bullet$~ The functions $F,G^k:[0,T]\times\R^d\times\Gamma\rrow\R$ are
measurable in $(t,x,v)$ and continuous in $v$, furthermore, $F(t,v)\in
W^{1,2}$ and $G^k(t,v)\in W^{2,2}$ for each $(t,v)$, and
$$|F(t,x,v)|+|G^k(t,x,v)|+\|F(t,\cdot,v)\|_{1,2}
+\|G^k(t,\cdot,v)\|_{2,2}\leq K_1.$$

$\bullet$~ $\xi_0\in W^{1,2}$.
\medskip

Under above conditions, we know immediately from Krylov and Rozowskii
\cite{KrRo82} that the state equation \eqref{dtz1} has a unique solution
$\xi^{V}\in L^{2}_{\sP}C_{w}W^{1,2}$ for any given $V\in
\mathscr{V}_{ad}$.\medskip

The adjoint equation of SPDE \eqref{dtz1} reads
\begin{equation}\label{dtz3}
  \left\{\begin{array}{l}
    du=-\big[\mathcal{L}^*(t,V)u+{\cM^k}^*(t,V)q^k+f(t,V) \big]\,dt
    + q^k \,dW^k_t,\\
    u(T)=\vf.
  \end{array}\right.
\end{equation}
By Theorem \ref{thm:main}, for any given $V\in \mathscr{V}_{ad}$, BSPDE
\eqref{dtz3} has a unique solution $(u,q)\in L_{\sP}^2C_{w}W^{1,2}\times
L_{\sP}^2W^{0,2}(d')$.
\medskip

Now we can give the necessary condition of an optimal control for the general
SPDE system (\ref{dtz1}) with the cost functional (\ref{dtz2}).
\begin{prop}\label{dtz4}
Under above hypotheses, we assume that $\widetilde{V}$ is an optimal
control along with the corresponding optimal state $\widetilde{\xi}$. Then
for a.e. $t\in[0,T]$, we have the maximum condition
$$
H(t,\widetilde{\xi}(t),\widetilde{V}(t),u(t),q(t))
=\max_{v\in\Gamma}H(t,\widetilde{\xi}(t),v,u(t),q(t)),\ \ \
\bP\textrm{-a.s.},$$ where $(u,q)$ is the solution of (\ref{dtz3}) with
$V(t)=\widetilde{V}(t)$ and the Hamiltonian function $H$ is defined by
\begin{eqnarray*}
H(t,\phi,v,\zeta,\eta)&:=&-\langle\mathcal{L}(t,v)\phi,\zeta\rangle_0-\langle
F(t,v),\zeta\rangle_0\\
&&-\langle \cM^k(t,v)\phi,\eta^k\rangle_0 - \langle
G^k(t,v),\eta^k\rangle_0-\langle f(t,v),\phi\rangle_0
\end{eqnarray*}
for $(t,\phi,v,\zeta,\eta)\in[0,T]\times W^{1,2}\times\Gamma\times
W^{1,2}\times L^2(d')$.
\end{prop}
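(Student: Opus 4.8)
The plan is to combine a spike (needle) variation of the optimal control with the duality between the forward state equation \eqref{dtz1} and the backward adjoint equation \eqref{dtz3}, keeping the adjoint pair fixed at the optimum. Fix $\tau\in[0,T)$, a value $v\in\Gamma$ and a set $A\in\mathscr{F}_\tau$; for small $\epsilon>0$ let $V^\epsilon$ be the admissible control equal to $v$ on $A\times[\tau,\tau+\epsilon)$ and to $\widetilde V$ elsewhere, write $\xi^\epsilon:=\xi^{V^\epsilon}$ for the corresponding state, and let $(u,q)$ denote the solution of \eqref{dtz3} with $V=\widetilde V$, fixed throughout. Optimality gives $J(V^\epsilon)-J(\widetilde V)\ge 0$, and the whole argument consists in re-expressing this difference through the Hamiltonian $H$ and then dividing by $\epsilon$.

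The central computation is a duality identity obtained by applying the It\^o product formula to $\langle\xi^\epsilon(t),u(t)\rangle_0$ in the Gelfand triple $W^{1,2}\hookrightarrow W^{0,2}\hookrightarrow W^{-1,2}$; here $\xi^\epsilon\in L^2_{\mathscr P}W^{1,2}$ (Krylov--Rozovskii~\cite{KrRo82}) and $u\in L^2_{\mathscr P}W^{1,2}$ (Theorem~\ref{thm:main}), so every pairing is finite after one integration by parts. Integrating from $0$ to $T$, taking expectations, and using $u(T)=\varphi$, $\xi^\epsilon(0)=\xi_0$ together with the adjoint relations $\langle\mathcal{L}(t,w)\phi,\psi\rangle_0=\langle\phi,\mathcal{L}^*(t,w)\psi\rangle_0$ and $\langle\mathcal{M}^k(t,w)\phi,\psi\rangle_0=\langle\phi,{\mathcal{M}^k}^*(t,w)\psi\rangle_0$, I would regroup the drift so that the surviving terms assemble into Hamiltonian values: the contributions carrying $V^\epsilon$ form $-H(t,\xi^\epsilon,V^\epsilon,u,q)$, those carrying $\widetilde V$ (coming from the adjoint equation) form $H(t,\xi^\epsilon,\widetilde V,u,q)$ plus the $\xi$-independent remainder $\langle F(\widetilde V),u\rangle_0+\langle G^k(\widetilde V),q^k\rangle_0$, while the two copies of the running-cost pairing $\langle f(\widetilde V),\xi^\epsilon\rangle_0$ cancel. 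Combining this with $J(V^\epsilon)=\mathbb{E}\int_0^T\langle f(V^\epsilon),\xi^\epsilon\rangle_0\,dt+\mathbb{E}\langle\varphi,\xi^\epsilon(T)\rangle_0$ and subtracting the specialization to $V=\widetilde V$, the affinity of $H$ in its state slot yields the clean identity
\begin{equation}
J(V^\epsilon)-J(\widetilde V)=\mathbb{E}\int_0^T\big[H(t,\xi^\epsilon(t),\widetilde V(t),u(t),q(t))-H(t,\xi^\epsilon(t),V^\epsilon(t),u(t),q(t))\big]\,dt .
\end{equation}
Since $V^\epsilon=\widetilde V$ off $A\times[\tau,\tau+\epsilon)$, the integrand vanishes there and the integral collapses to $\mathbb{E}\int_\tau^{\tau+\epsilon}\mathbf 1_A[H(t,\xi^\epsilon,\widetilde V,u,q)-H(t,\xi^\epsilon,v,u,q)]\,dt\ge 0$.

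To pass to the limit I would first record the stability estimate $\mathbb{E}\sup_{t\le T}\|\xi^\epsilon(t)-\widetilde\xi(t)\|_{0,2}^2=O(\epsilon)$, which follows from the energy estimate for the linear forward SPDE satisfied by $\zeta^\epsilon:=\xi^\epsilon-\widetilde\xi$, whose source (in divergence form, supported on $A\times[\tau,\tau+\epsilon)$) is controlled in $W^{-1,2}\times W^{0,2}$ by $\|\widetilde\xi\|_{1,2}$. Replacing $\xi^\epsilon$ by $\widetilde\xi$ in the integrand, dividing by $\epsilon$ and invoking the Lebesgue differentiation theorem at a.e.\ $\tau$ leads to
\begin{equation}
\mathbb{E}\Big[\mathbf 1_A\big(H(\tau,\widetilde\xi(\tau),\widetilde V(\tau),u(\tau),q(\tau))-H(\tau,\widetilde\xi(\tau),v,u(\tau),q(\tau))\big)\Big]\ge 0 .
\end{equation}
As $A\in\mathscr{F}_\tau$ is arbitrary the bracket is a.s.\ nonnegative, and letting $v$ run through a countable dense subset of $\Gamma$ and using the assumed continuity of the coefficients (hence of $H$) in $v$ upgrades this to the stated maximum condition for a.e.\ $t$, $\mathbb{P}$-a.s.

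I expect two points to carry the technical weight. The first is the rigorous justification of the duality identity: both the state and the adjoint are only generalized solutions, so one must pass from the fixed test functions $\eta\in C_0^\infty$ of \eqref{eq:weaksol} to the time-dependent, random $W^{1,2}$-valued integrand $u(t)$, i.e.\ prove a forward--backward It\^o product formula in the variational framework; this is available by combining the backward It\^o formula behind Remark~\ref{rmk:2-1} (Du--Meng~\cite{DuMe10}) with the forward one of Krylov--Rozovskii~\cite{KrRo82}, once the integrability of each bilinear term is checked. The second, and the main obstacle, is the control of the correction $\mathbb{E}\int_\tau^{\tau+\epsilon}\mathbf 1_A[\delta H(t,\xi^\epsilon)-\delta H(t,\widetilde\xi)]\,dt$ produced by replacing $\xi^\epsilon$ by $\widetilde\xi$, where $\delta H(t,\phi):=H(t,\phi,\widetilde V,u,q)-H(t,\phi,v,u,q)$: because the degenerate state equation propagates but does not gain regularity, $\zeta^\epsilon$ is small ($O(\epsilon^{1/2})$) only in $W^{0,2}$ while merely bounded in $W^{1,2}$, yet the $\mathcal{L}$- and $\mathcal{M}$-pairings in $\delta H$ see the first derivatives of $\zeta^\epsilon$. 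The clean way around this is to prove the maximum principle first under smooth coefficients and data (as in condition~\emph{($\B$)}), where the full estimate $\mathbb{E}\sup_{t\le T}\|\zeta^\epsilon(t)\|_{1,2}^2=O(\epsilon)$ holds and makes the correction $O(\epsilon^{3/2})=o(\epsilon)$, and then to recover the general case by the same mollification-and-stability scheme used in the proof of Lemma~\ref{lem:superp.eq}, passing to the limit in the maximum inequality.
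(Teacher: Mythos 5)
Your overall route --- spike variation, duality via a forward--backward It\^o product formula, and exactness of the first-order expansion because the state equation and the cost are affine in the state --- is precisely the procedure the paper intends: the paper offers no detailed proof of Proposition~\ref{dtz4}, only the remark that one follows Zhou's Theorem 5.1 and that Theorem~\ref{thm:main} supplies the adjoint pair $(u,q)$ in the degenerate case. Your identity $J(V^\epsilon)-J(\widetilde V)=\E\int_\tau^{\tau+\epsilon}\mathbf{1}_A\big[H(t,\xi^\epsilon,\widetilde V,u,q)-H(t,\xi^\epsilon,v,u,q)\big]\,dt\ge 0$ is correct, and so is the stability claim $\E\sup_{t\le T}\|\xi^\epsilon(t)-\widetilde\xi(t)\|_{0,2}^2=O(\epsilon)$, although in the degenerate case its proof cannot go through duality against a $W^{-1,2}$ source (there is no coercive $\|\cdot\|_{1,2}$ term to absorb it); one must integrate by parts and use the uniform $W^{1,2}$ bounds on $\xi^\epsilon$ and $\widetilde\xi$, which is harmless but worth noting.

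The genuine gap is in your final step. The mollification-and-stability scheme of Lemma~\ref{lem:superp.eq} approximates an \emph{equation}, whose solvability and estimates are stable under smoothing of coefficients; a maximum principle is not stable in this way, because it is a necessary condition of \emph{optimality}, and optimality does not transfer: once you mollify the coefficients of \eqref{dtz1} and of the cost \eqref{dtz2}, $\widetilde V$ is in general no longer optimal for the approximating problem, so there is no ``maximum inequality'' for the smooth problems that you could pass to the limit in. Approximate optimality does not rescue this: $\widetilde V$ is only $\delta_n$-optimal for the $n$-th problem with $\delta_n\to 0$ \emph{independently of} $\epsilon$, and the spike argument forces you to divide by $\epsilon$ and let $\epsilon\to 0$ at fixed $n$, so the error $\delta_n/\epsilon$ blows up; reversing the limits is blocked precisely because your correction estimate degenerates as $n\to\infty$ (Ekeland's variational principle can repair this, but that is a different and heavier argument which you do not invoke). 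The fix consistent with your own computation is never to perturb the problem: keep the exact optimality inequality and the exact duality identity for the original data, and mollify only the adjoint pair \emph{inside} the correction term. With $\zeta^\epsilon:=\xi^\epsilon-\widetilde\xi$, $u_n:=S_{1/n}u$, $q_n:=S_{1/n}q$, split for instance $\langle\delta a^{ij}\zeta^\epsilon_{x^j},u_{x^i}\rangle_0=\langle\delta a^{ij}\zeta^\epsilon_{x^j},(u-u_n)_{x^i}\rangle_0-\langle\zeta^\epsilon,(\delta a^{ij}u_{n,x^i})_{x^j}\rangle_0$: the first term is bounded by $C\|\zeta^\epsilon\|_{1,2}\|u-u_n\|_{1,2}$, uniformly in $\epsilon$ and small after the $\epsilon$-limit at Lebesgue points once $n\to\infty$; the second is bounded by $C(n)\|\zeta^\epsilon\|_{0,2}\|u\|_{1,2}=O(\sqrt\epsilon)\,C(n)$, which vanishes after division by $\epsilon$... more precisely contributes $O(\sqrt{\epsilon})C(n)$ to the quotient; the terms $\langle\delta\sigma^{ik}\zeta^\epsilon_{x^i},q^k\rangle_0$ are treated identically with $q_n$. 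Taking $\epsilon\to 0$ first and then $n\to\infty$ closes the argument under the stated hypotheses, with no detour through smooth control problems.
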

The procedure to the proof of Proposition (\ref{dtz4}) is very similar to Zhou~\cite[Theorem 5.1]{Zhou93}. Although the adjoint equation BSPDE
(\ref{dtz3}) only satisfies the degenerate parabolic condition rather than super-parabolic condition, Theorem~\ref{thm:main} guarantees that
degenerate parabolic condition still works.

It is well known that the optimal control problem of partially observed diffusions with general nonlinear cost functionals can be transformed
into an optimal control with complete observation of Zakai's equations with linear cost functionals. Therefore the previous results also
enable us to discuss the partially observed diffusions under degenerate parabolic condition.

\section{Proof of Lemma \ref{lem:basic}}

Define
\begin{equation}
  I  :=
  \int_{\bR^d} G'(\Psi)\Upsilon
  \,dx \quad \hbox{ \rm and }\quad
  J  := \int_{\bR^d}\big[ G(\Psi) + G'(\Psi)\Psi \big] \,dx.
\end{equation}
 In what follows, the notations $G(\Psi),G'(\Psi)$ and
$G''(\Psi)$ will be occasionally simplified as $G,G'$ and $G''$ in the following arguments, when no confusion occurs.
\medskip

Now we estimate all terms in (\ref{formula:Theta}). First of all, we
have
\begin{equation}\label{ieq:c1}
    2\sum_{|\beta| \leq m} \int_{\bR^d} G'
    D^{\beta}u D^{\beta}(cu) \,dx
    \leq C \int_{\bR^d} G' \Psi \,dx
    \leq C J.
\end{equation}
Then by Young's inequality, we have
\begin{eqnarray}\label{ieq:c2}
    \nonumber 2 \sum_{|\beta| \leq m} \int_{\bR^d} G'
    D^{\beta}u D^{\beta}f \,dx
    &\leq&  \sum_{|\beta| \leq m}
    \int_{\bR^d} \big(|D^{\beta} u|^{2}
    +|D^{\beta}f|^{2}\big) G' \,dx\\
    &\leq& J+\sum_{|\beta| \leq m} \int_{\bR^d} G'|D^{\beta}f|^{2} \,dx.
\end{eqnarray}
By the integration by parts, it is not hard to get
\begin{eqnarray}\label{ieq:c3}
    \nonumber 2 \sum_{|\beta| \leq m} \int_{\bR^d} G'
    D^{\beta}u D^{\beta}(\widetilde{b}^{i} u_{x^i}) \,dx
    \leq 2 \sum_{|\beta| \leq m} \int_{\bR^d}  \widetilde{b}^{i} G'
    D^{\beta}u D^{\beta}u_{x^i} \,dx
    + C J \\
    \leq \int_{\bR^d} \widetilde{b}^{i}G_{x^i}\,dx + C J
    \leq -\int_{\bR^d} \widetilde{b}^{i}_{x^i}G \,dx + C J
    \leq C J .~~~~
\end{eqnarray}
\medskip
Similarly as \eqref{ieq:c3}, it follows
\[
  \begin{split}
    - 2 \sum_{|\beta| \leq m} \int_{\bR^d} G'
    (a^{ij}-2\alpha^{ij})_{x^j}(D^{\beta}u) D^{\beta}
    u_{x^i} \,dx
    \leq C J .
  \end{split}
\]
Then we introduce a new notation $\sum_{\beta} ~:= \sum_{\beta_1 +
\beta_2 = \beta, |\beta_1|=1}$ to deal with the second order derivatives
of $u$ and we have
\begin{eqnarray}\label{ieq:c4}
  \begin{split}
    & 2 \sum_{|\beta| \leq m} \int_{\bR^d} G'
    D^{\beta}u D^{\beta} \big[ (a^{ij}-2\alpha^{ij})
    u_{x^i x^j}\big] \,dx\\
    &~~~~ \leq C J
    + 2  \sum_{|\beta| \leq m}\int_{\bR^d} G'
    (a^{ij}-2\alpha^{ij}) D^{\beta}u D^{\beta}
    u_{x^i x^j} \,dx\\
    &~~~~~~~~ + 2 \sum_{|\beta| \leq m} \sum_{\beta}
    \int_{\bR^d} G' D^{\beta}u D^{\beta_1}(a^{ij}-2\alpha^{ij})
    D^{\beta_2} u_{x^i x^j} \,dx \\
    &~~~~ \leq C J
    - 2  \int_{\bR^d} G'
    (a^{ij}-2\alpha^{ij}) D^{\beta}u_{x^i} D^{\beta}
    u_{x^j} \,dx\\
    &~~~~~~~~ -4 \int_{\bR^d} G'' (a^{ij}-2\alpha^{ij})
    \bigg( \sum_{|\beta| \leq m} D^{\beta}u D^{\beta}u_{x^i} \bigg)
    \bigg( \sum_{|\gamma| \leq m} D^{\gamma}u D^{\gamma}u_{x^j} \bigg)\,dx \\
    &~~~~~~~~ + 2 \sum_{|\beta| \leq m} \sum_{\beta}
    \int_{\bR^d} G' D^{\beta}u D^{\beta_1}(a^{ij}-2\alpha^{ij})
    D^{\beta_2} u_{x^i x^j} \,dx.
  \end{split}
\end{eqnarray}
\medskip

For the remaining four terms containing the function
$r$, first by
Cauchy-Schwarz inequality, we have 
\begin{eqnarray}\label{ieq:c5}
\begin{split}
    R_1 & := 2 \sum_{|\beta| \leq m} \int_{\bR^d} G'
    D^{\beta}u D^{\beta}(\nu^{k} r^{k})\,dx \leq C\sum_{|\beta| \leq m}
    \int_{\bR^d} G' \Psi^{\frac{1}{2}} \|D^{\beta}r \| \,dx\\
    & \leq {\eps\over3} I
    + {C\over{\varepsilon}} J.
\end{split}
\end{eqnarray}
By the integration by parts, it follows 
\begin{eqnarray}\label{ieq:c6}
  \begin{split}
    R_2 & := 2 \sum_{|\beta| \leq m} \int_{\bR^d} G'
    D^{\beta}u D^{\beta}(\sigma^{ik} r^{k}_{x^i})\,dx\\
    & \leq 2 \sum_{|\beta| \leq m} \int_{\bR^d} G'
    D^{\beta}u (\sigma^{ik} D^{\beta} r^{k})_{x^i} \,dx
    + {\eps\over3} I
    + {C\over\varepsilon} J \\
    & = - 4 \int_{\bR^d} G''
    \bigg[\sum_{|\beta| \leq m} (D^{\beta} u)\sigma^{ik} D^{\beta}u_{x^i}
    \bigg]
    \bigg(\sum_{|\gamma| \leq m} D^{\gamma}u D^{\gamma} r^{k}
    \bigg) \,dx\\
    & ~~~~- 2 \sum_{|\beta| \leq m}\int_{\bR^d} G'
    \sigma^{ik} D^{\beta}u_{x^i} D^{\beta} r^{k} \,dx
    + {\eps\over3}I
    + {C\over\varepsilon} J.
  \end{split}
\end{eqnarray}
Note that
$D^{\beta}(\sigma^{ik}u_{x^i})-\sigma^{ik}D^{\beta}u_{x^i}$ does not
contain the $m+1$ order derivatives of $u$, hence using the
integration by parts again
we have 
\begin{eqnarray}\label{ieq:c7}
  \begin{split}
    R_3 & := - \sum_{|\beta| \leq m}
    \int_{\bR^d} G' \| D^{\beta} (r - \sigma^{i} u_{x^i}) \|^{2}\,dx\\
    & = - \sum_{|\beta| \leq m}
    \int_{\bR^d} G' \big\| D^{\beta} r
    - \sigma^{i}D^{\beta}u_{x^i}
    - \big[D^{\beta}(\sigma^{i}u_{x^i})-\sigma^{i}D^{\beta}u_{x^i}\big]
    \big\|^{2}\,dx\\
    & \leq - I
    - \sum_{|\beta| \leq m} \int_{\bR^d} G'
    \|\sigma^{i}D^{\beta}u_{x^i}\|^{2}\,dx
    + 2 \sum_{|\beta| \leq m} \int_{\bR^d} G'
    \sigma^{ik} D^{\beta} u_{x^i} D^{\beta} r^{k} \,dx\\
    & ~~~~ - 2 \sum_{|\beta| \leq m} \int_{\bR^d} G'
    \sigma^{jk}D^{\beta}u_{x^j}
    \big[D^{\beta}(\sigma^{ik}u_{x^i})-\sigma^{ik}D^{\beta}u_{x^i}\big]
    \,dx + {\eps\over3} I  + {C\over\varepsilon} J\\
    & \leq - I
    - \sum_{|\beta| \leq m} \int_{\bR^d} G'
    \|\sigma^{i}D^{\beta}u_{x^i}\|^{2}\,dx
    + 2 \sum_{|\beta| \leq m} \int_{\bR^d} G'
    \sigma^{ik} D^{\beta} u_{x^i} D^{\beta} r^{k} \,dx\\
    & ~~~~ + 4 \int_{\bR^d} G''
    \bigg\{ \sum_{|\beta| \leq m}D^{\beta}u
    \big[D^{\beta}(\sigma^{ik}u_{x^i})-\sigma^{ik}D^{\beta}u_{x^i}\big]\bigg\}
    \bigg( \sum_{|\gamma| \leq m}\sigma^{jk}D^{\gamma}u D^{\gamma}u_{x^j}
    \bigg)\,dx\\
    & ~~~~ + 2 \sum_{|\beta| \leq m} \sum_{\beta} \int_{\bR^d} G'
    \sigma^{jk}D^{\beta}u D^{\beta_1}
    \sigma^{ik} D^{\beta_2}u_{x^i x^j}\,dx
    + {\eps\over3} I  + {C\over\varepsilon} J.
  \end{split}
\end{eqnarray}
The last term in (\ref{formula:Theta}) can be handled as follows: 
\begin{eqnarray}\label{ieq:c8}
  \begin{split}
    R_4 & := -2
    \int_{\bR^d} G''
    \bigg\| \sum_{|\beta| \leq m} D^{\beta}u D^{\beta}
    (r-\sigma^{i}u_{x^i})\bigg\|^{2}\,dx\\
    & = -2
    \int_{\bR^d} G''
    \bigg\| \sum_{|\beta| \leq m} D^{\beta}u \bigg(
    \Big\{ D^{\beta} r
    -\big[D^{\beta}(\sigma^{i}u_{x^i})
    -\sigma^{i}D^{\beta}u_{x^i}\big]\Big\}
    - \sigma^{i}D^{\beta}u_{x^i}\bigg)
    \bigg\|^{2}\,dx\\
    & \leq -2
    \int_{\bR^d} G''
    \bigg\| \sum_{|\beta| \leq m}
    \sigma^{i}D^{\beta}u D^{\beta} u_{x^i} \bigg\|^{2}\,dx\\
    & ~~~~ + 4\int_{\bR^d} G''
    \bigg[\sum_{|\beta| \leq m} (D^{\beta} u)\sigma^{ik} D^{\beta}u_{x^i}
    \bigg]
    \bigg(\sum_{|\gamma| \leq m} D^{\gamma}u D^{\gamma} r^{k}
    \bigg) \,dx\\
    & ~~~~ - 4 \int_{\bR^d} G''
    \bigg\{ \sum_{|\beta| \leq m}D^{\beta}u
    \big[D^{\beta}(\sigma^{ik}u_{x^i})-\sigma^{ik}D^{\beta}u_{x^i}\big]\bigg\}
    \bigg( \sum_{|\gamma| \leq m}\sigma^{jk}D^{\gamma}u D^{\gamma}u_{x^j}
    \bigg)\,dx.
  \end{split}
\end{eqnarray}
Then taking \eqref{ieq:c5}--\eqref{ieq:c8} into account and keeping
$\alpha^{ij} = \frac{1}{2}\sigma^{ik}\sigma^{jk}$ in mind, we obtain
\begin{eqnarray}\label{ieq:c9}
  \begin{split}
    R_1 + & R_2 + R_3 + R_4 \\
    \leq & - (1-\varepsilon)I + {C\over\varepsilon} J
    - \sum_{|\beta| \leq m} \int_{\bR^d} G'
    \|\sigma^{i}D^{\beta}u_{x^i}\|^{2}\,dx
    -2 \int_{\bR^d} G''
    \bigg\| \sum_{|\beta| \leq m}
    \sigma^{i}D^{\beta}u D^{\beta} u_{x^i} \bigg\|^{2}\,dx\\
    & + 2 \sum_{|\beta| \leq m} \sum_{\beta} \int_{\bR^d} G'
    \sigma^{jk}D^{\beta}u D^{\beta_1}
    \sigma^{ik} D^{\beta_2}u_{x^i x^j}\,dx\\
    = &  - (1-\varepsilon)I + {C\over\varepsilon} J
    - 2 \sum_{|\beta| \leq m} \int_{\bR^d} G'
    \alpha^{ij}D^{\beta}u_{x^i} D^{\beta} u_{x^j} \,dx\\
    & -4\int_{\bR^d} G'' \alpha^{ij}
    \bigg( \sum_{|\beta| \leq m} D^{\beta}u D^{\beta}u_{x^i} \bigg)
    \bigg( \sum_{|\gamma| \leq m} D^{\gamma}u D^{\gamma}u_{x^j} \bigg)\,dx \\
    &+ 2\sum_{|\beta| \leq m} \sum_{\beta}
    \int_{\bR^d} G' D^{\beta}u D^{\beta_1} \alpha^{ij}
    D^{\beta_2} u_{x^i x^j} \,dx.
  \end{split}
\end{eqnarray}
By \eqref{ieq:c1}--\eqref{ieq:c4}, \eqref{ieq:c9} and the parabolicity
condition ($\DP$) (i.e. the matrix $(a^{ij}-\alpha^{ij})_{d \times d} \geq
0$), we have
\begin{eqnarray}\label{ieq:c10}
  \begin{split}
    &\int_{\bR^d} \Theta(u,r,f,x,t,\omega) \,dx \\
    &~~~~ \leq  - (1-\varepsilon)I + (C+{C\over\varepsilon}) J
    - 2 \sum_{|\beta| \leq m} \int_{\bR^d} G'
    (a^{ij}-\alpha^{ij}) D^{\beta}u_{x^i} D^{\beta}
    u_{x^j} \,dx\\
    &~~~~~~~~ -4\int_{\bR^d} G'' (a^{ij}-\alpha^{ij})
    \bigg( \sum_{|\beta| \leq m} D^{\beta}u D^{\beta}u_{x^i} \bigg)
    \bigg( \sum_{|\gamma| \leq m} D^{\gamma}u D^{\gamma}u_{x^j} \bigg)\,dx \\
    &~~~~~~~~ + 2 \sum_{|\beta| \leq m} \sum_{\beta}
    \int_{\bR^d} G' D^{\beta}u D^{\beta_1}(a^{ij}-\alpha^{ij})
    D^{\beta_2} u_{x^i x^j} \,dx+\sum_{|\beta| \leq m}
    \int_{\bR^d} G'|D^{\beta}f|^{2} \,dx\\
    &~~~~ \leq  - (1-\varepsilon)I + (C+{C\over\varepsilon}) J
    - 2 \sum_{|\beta| \leq m}  \int_{\bR^d} G'
    A^{ij} D^{\beta}u_{x^i} D^{\beta}
    u_{x^j} \,dx\\
    &~~~~~~~~ + 2 \sum_{|\beta| \leq m} \sum_{\beta}
    \int_{\bR^d} G' D^{\beta}u D^{\beta_1}A^{ij}
    D^{\beta_2} u_{x^i x^j} \,dx+\sum_{|\beta| \leq m}
    \int_{\bR^d} G'|D^{\beta}f|^{2} \,dx,
  \end{split}
\end{eqnarray}
with $A^{ij} := a^{ij}-\alpha^{ij}$. 

To
proceed a further estimate to (\ref{ieq:c10}), we need the following well-known lemma.
\begin{lem}\label{lem:olenik} (Oleinik \cite{Olei67})
  Assume that $b^{ij}(x)\xi^{i}\xi^{j} \geq 0$ for all $x\in \bR^d$,
  $\xi = (\xi^1,\dots,\xi^d)\in \bR^d$ and $b^{ij}\in C^{2}(\bR^d)$. Then for any
  function $v\in C^{2}(\bR^d)$,
  \[
  \left(b^{ij}_{x^{\rho}}v_{x^i x^j}\right)^{2} \leq
  C'b^{ij} v_{x^i x^k} v_{x^j x^k},
  ~~~~\rho = 1,\dots,d,
  \]
  where $C'$ depends only on the second order derivatives of $b^{ij}$.
\end{lem}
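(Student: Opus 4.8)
The plan is to reduce the tensorial inequality to a one–dimensional real–analysis fact and then reassemble it via the spectral decomposition of the Hessian of $v$. The cornerstone is the following elementary estimate, which carries the genuine analytic content of the lemma: if $g\in C^{2}(\bR)$ is nonnegative with $\sup|g''|\le M$, then $g'(x)^{2}\le 2Mg(x)$ for all $x$. First I would prove this by Taylor's theorem: for every $t$, nonnegativity of $g$ gives $0\le g(x+t)\le g(x)+g'(x)t+\tfrac{M}{2}t^{2}$, so the quadratic $t\mapsto \tfrac{M}{2}t^{2}+g'(x)t+g(x)$ is nonnegative for all real $t$; its discriminant must therefore be nonpositive, which is exactly $g'(x)^{2}\le 2Mg(x)$ (the case $M=0$ being trivial, since then $g$ is affine and nonnegative, forcing $g'\equiv 0$).

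Second, I would apply this along the $x^{\rho}$–axis to the nonnegative function $g_{\xi}(x):=b^{ij}(x)\xi^{i}\xi^{j}$ for a fixed vector $\xi\in\bR^{d}$. Since $\sup_{x}|\partial^{2}_{x^{\rho}}g_{\xi}|\le C_{0}|\xi|^{2}$, with $C_{0}$ depending only on $\sup|b^{ij}_{x^{\rho}x^{\rho}}|$, the scalar estimate yields the uniform quadratic–form bound
\[
\big(b^{ij}_{x^{\rho}}\xi^{i}\xi^{j}\big)^{2}\le 2C_{0}\,|\xi|^{2}\,b^{ij}\xi^{i}\xi^{j},\qquad \xi\in\bR^{d}.
\]
In particular this holds for every unit eigenvector of the Hessian of $v$, which is all that the assembly step will need.

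Third comes the reassembly. Write the symmetric matrix $V=(v_{x^{i}x^{j}})$ in its spectral form $V_{ij}=\sum_{l}\lambda_{l}e^{l}_{i}e^{l}_{j}$ with $\{e^{l}\}$ orthonormal and eigenvalues $\lambda_{l}$. A direct computation (using $V=V^{*}$) gives $b^{ij}v_{x^{i}x^{k}}v_{x^{j}x^{k}}=\sum_{l}\lambda_{l}^{2}\,b^{ij}e^{l}_{i}e^{l}_{j}$, while $b^{ij}_{x^{\rho}}v_{x^{i}x^{j}}=\sum_{l}\lambda_{l}\,b^{ij}_{x^{\rho}}e^{l}_{i}e^{l}_{j}$. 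Setting $g_{l}:=b^{ij}e^{l}_{i}e^{l}_{j}\ge 0$ and applying the Cauchy–Schwarz inequality over those indices $l$ with $g_{l}>0$,
\[
\Big(\sum_{l}\lambda_{l}\,b^{ij}_{x^{\rho}}e^{l}_{i}e^{l}_{j}\Big)^{2}\le\Big(\sum_{l}\lambda_{l}^{2}g_{l}\Big)\Big(\sum_{l:\,g_{l}>0}\frac{(b^{ij}_{x^{\rho}}e^{l}_{i}e^{l}_{j})^{2}}{g_{l}}\Big),
\]
and the estimate of the second step bounds each ratio in the last factor by $2C_{0}$, so that factor is at most $2C_{0}d$, giving $C'=2C_{0}d$.

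The step that I expect to demand the most care is this last one, specifically the legitimacy of the Cauchy–Schwarz division along the degenerate eigendirections. The whole point of the lemma is that $b$ may vanish on some eigenvectors of $V$, where the quotient $(b^{ij}_{x^{\rho}}e^{l}_{i}e^{l}_{j})^{2}/g_{l}$ is a priori of the form $0/0$; the scalar estimate is precisely what rescues this, since $(b^{ij}_{x^{\rho}}e^{l}_{i}e^{l}_{j})^{2}\le 2C_{0}g_{l}=0$ forces the numerator to vanish wherever the denominator does. Hence restricting the sum to $\{l:g_{l}>0\}$ discards only null terms on both sides, and the estimate closes. Everything else—the Taylor argument and the spectral bookkeeping—is routine.
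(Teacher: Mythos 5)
Your proof is correct. There is nothing in the paper to compare it against: the authors state this lemma as a known result quoted from Oleinik \cite{Olei67} and give no proof, so you have in effect supplied the missing classical argument. Each of your three steps is sound: the discriminant argument for $g'(x)^2\le 2Mg(x)$ (with the correct separate treatment of $M=0$); its application in the $x^{\rho}$-direction to the nonnegative quadratic form $g_{\xi}=b^{ij}\xi^i\xi^j$, giving $(b^{ij}_{x^{\rho}}\xi^i\xi^j)^2\le 2C_0|\xi|^2\,b^{ij}\xi^i\xi^j$; and the spectral reassembly, where your identities $b^{ij}v_{x^ix^k}v_{x^jx^k}=\sum_l\lambda_l^2\,b^{ij}e^l_ie^l_j$ and $b^{ij}_{x^{\rho}}v_{x^ix^j}=\sum_l\lambda_l\,b^{ij}_{x^{\rho}}e^l_ie^l_j$ are exactly $\mathrm{tr}(BV^2)$ and $\mathrm{tr}(B_{x^{\rho}}V)$ computed in the eigenbasis of $V$. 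The point you flagged as delicate is indeed handled correctly: on a degenerate eigendirection ($g_l=0$) your quadratic-form estimate forces $b^{ij}_{x^{\rho}}e^l_ie^l_j=0$, so those terms drop from the left-hand side and the weighted Cauchy--Schwarz over $\{l:g_l>0\}$ is legitimate, yielding $C'=2C_0d$. This is essentially the textbook proof (Oleinik--Radkevich), except that the classical version diagonalizes the Hessian by an orthogonal change of coordinates, whereas your purely algebraic use of the spectral decomposition sidesteps the question of how the fixed differentiation direction $x^{\rho}$ transforms under such a rotation---a genuine, if small, simplification. One implicit hypothesis you should make explicit: the scalar lemma needs $\sup_{t\in\bR}|g''|\le M$, so you are using that the second derivatives of $b^{ij}$ are bounded on all of $\bR^d$; this is the intended reading of the statement (the constant $C'$ ``depends on the second order derivatives'' through their suprema), and it is what holds in the paper's application, where condition ($\A_m$) bounds the derivatives of $a^{ij}$ and $\sigma^{ik}$ up to order $\max\{2,m\}$.
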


By Lemma \ref{lem:olenik}, in view of the twice differentiability of $A^{ij}$ and the condition $(A^{ij})_{d \times d}\geq 0$, we have
\[
\begin{split}
  \left(D^{\beta_1}A^{ij} D^{\beta_2} u_{x^i x^j}\right)^{2}
  \leq C \sum_{|\gamma|=1} A^{ij} D^{\gamma + \beta_2}u_{x^i}
  D^{\gamma + \beta_2}u_{x^j}
  \leq C \sum_{|\beta| \leq m}A^{ij} D^{\beta} u_{x^i} D^{\beta}
  u_{x^j}.
\end{split}
\]
Thus,
\[
    2 \int_{\bR^d} G' D^{\beta}u D^{\beta_1}A^{ij}
    D^{\beta_2} u_{x^i x^j} \,dx \leq \eps
    \int_{\bR^d} G' \left(D^{\beta_1}A^{ij}
    D^{\beta_2} u_{x^i x^j}\right)^{2} \,dx
    + {C\over\varepsilon} J .
\]
Therefore, (\ref{ieq:c10})
can be simplified as below:
\[
  \begin{split}
    &\int_{\bR^d} \Theta(u,r,f; x,t,\omega) \,dx\\
    &~~~~ \leq  - (1-\varepsilon) I
    - (2-C\eps) \sum_{|\beta| \leq m} \int_{\bR^d} G'
    A^{ij} D^{\beta}u_{x^i} D^{\beta}
    u_{x^j} \,dx\\
    &~~~~~~~~ +  (C+{C\over\varepsilon}) J
    +\sum_{|\beta| \leq m} \int_{\bR^d} G'|D^{\beta}f|^{2} \,dx.
  \end{split}
\]
Noting the fact that $(A^{ij})_{d \times d}\geq 0$ and $\varepsilon$ can be chosen sufficiently small, we obtain the estimate
\eqref{est:basic} and the proof of Lemma \ref{lem:basic} is complete.

\section{Proof of Lemma \ref{lem:sm.eq.est}}

We need do some preparations before proving Lemma \ref{lem:sm.eq.est}. By condition (\B) and
Sobolev's imbedding theorem, there exists modifications of $f$ and $\vf$,
still denoted by $f$ and $\vf$, which are infinitely differentiable w.r.t.
$x$ for all $(\omega,t)$. Since the imbedding of $W^{m,2}$ in $C^n$ is
continuous when $2(m-n)>d$, the measurability is preserved. Therefore, $f$ are
$\sF_{t}$-measurable for each $(t,x)$ and predictable for each $x$.

In view of Theorem 2.3 in Du and Meng~\cite{DuMe10}, we know that under conditions ($\SP$) and (\B),
BSPDE~\eqref{eq:main}-\eqref{con:terminal} has a unique generalized solution $(u,q)$ satisfying
\begin{equation}\label{prp:sm.eq.sol}
u \in \bigcap_{m\in \mathbb{Z}_{+}}L^{2}_{\sP}CW^{m,2},~~~~ q \in
\bigcap_{m\in \mathbb{Z}_{+}}L^{2}_{\sP}W^{m,2}.
\end{equation}
We always choose appropriate versions of $u$ and $q$ such that
\begin{enumerate}
\item[(i)] for each
$\omega$, $u(\omega) \in C([0,T];W^{m,2})$ for all $m \in \mathbb{Z}_+$;
\item[(ii)]
for all $t$, $\E \|q(t)\|_{m,2}^2 < \infty$ for all $m \in \mathbb{Z}_+$.
\end{enumerate}
By Sobolev's imbedding theorem again, we have the following properties to
$(u,q)$:
\begin{enumerate}
\item[(i)]
$u(t,x)$ and $q(t,x)$ are $\sF_{t}$-measurable for each $(t,x)$;
\item[(ii)]
$u(t,x)$ is jointly continuous in $(t,x)$ for each $\omega$;
\item[(iii)]
$u(t,x)$ and $q(t,x)$ are infinitely differentiable w.r.t. $x$ for each
$(\omega,t)$ and all the derivatives of $u(t,x)$ are continuous in $(t,x)$
for each $\omega$.
\end{enumerate}
Moreover, we first have the following
\begin{lem}\label{lem:sm.eq.prp}
  Under conditions \emph{($\SP$)} and \emph{($\B$)},
  BSPDE~\eqref{eq:main}-\eqref{con:terminal} has
  a unique generalized solution $(u,q)$ satisfying \eqref{prp:sm.eq.sol} and
  \begin{enumerate}
    \item[\emph{(a)}]
    for each $x$, the
    equation
    \begin{eqnarray}\label{eq:pointwiseform}
    \begin{split}
      D^{\alpha}u(t,x) = D^{\alpha}\vf(x) + \int_{t}^{T}
      D^{\alpha}\big(a^{ij}u_{x^i x^j}+b^{i}u_{x^i}+cu\\
      +\sigma^{ik}q^{k}_{x^i}+\nu^{k}q^{k}+f \big)(s,x)\,ds
      -\int_{t}^{T} D^{\alpha}q^{k}(s,x)\,dW_{s}^{k}
    \end{split}
    \end{eqnarray}
    holds for all $t \in [0,T]$ and all multi-index $\alpha$ on a full-measure set independent of $(t,x)$;
    \item[\emph{(b)}]
    for arbitrary $m\in \mathbb{Z}_+$ and $p\geq 2$,
    $u(\omega)\in C([0,T];W^{m,p})$ for each $\omega$.
  \end{enumerate}
\end{lem}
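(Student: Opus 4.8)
The plan is to upgrade the generalized (weak) formulation to a pointwise-in-$x$ identity, exploiting the full smoothness afforded by condition (\B), and then to deduce the pathwise $W^{m,p}$-continuity from a Sobolev embedding. Throughout I would work with the versions of $u$ and $q$ already fixed above, for which $u(\omega)\in C([0,T];W^{m,2})$ for every $m$, both $u(t,x)$ and $q(t,x)$ are infinitely differentiable in $x$, and all $x$-derivatives of $u$ are jointly continuous in $(t,x)$.

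First I would establish part (a). Starting from the weak identity \eqref{eq:weaksol}, I would integrate by parts in $x$---which is legitimate because the coefficients and $u,q$ are smooth---to rewrite it in non-divergence form tested against an arbitrary $\eta\in C_0^\infty$; using the $t$-continuity of $u$ this holds for every $t\in[0,T]$. Specializing $\eta$ to the shifted mollifier $\eta_\eps^x(\cdot)=\eps^{-d}\zeta((x-\cdot)/\eps)$ converts each inner product into the averaging operator $S_\eps$ evaluated at $x$, and a stochastic Fubini argument (valid since $q$ is smooth and square-integrable) turns the martingale term into $\int_t^T S_\eps q^k(s,x)\,dW_s^k$. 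Letting $\eps\downarrow0$ exactly as in the proof of Corollary~\ref{cor:001}(ii)---uniform convergence of the mollifications for the smooth terms, and the BDG inequality together with Sobolev's theorem for the stochastic term---yields the pointwise equation \eqref{eq:pointwiseform} for $\alpha=0$, for each fixed $x$ and all $t$, almost surely.

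To obtain \eqref{eq:pointwiseform} for general $\alpha$ I would commute $D^\alpha$ with the integrals: differentiation under the Lebesgue integral is immediate from continuity of the derivatives, while differentiation under the It\^o integral, $D^\alpha\int_t^T q^k\,dW^k=\int_t^T D^\alpha q^k\,dW^k$, is justified by the same BDG-plus-Sobolev estimate applied to difference quotients of $q$ (here the smoothness in (\B) and $\E\|q(t)\|_{m,2}^2<\infty$ for all $m$ are used). The remaining, and most delicate, point is to collapse the many $(\alpha,x)$-dependent exceptional sets into a single null set independent of $(t,x)$ and $\alpha$. For each fixed multi-index $\alpha$ I would let $x$ range over $\mathbb{Q}^d$ and $t$ over $[0,T]$, obtaining a full-measure set on which \eqref{eq:pointwiseform} holds; since the left-hand side and the drift term are continuous in $x$ by smoothness, the martingale term is also continuous in $x$ (being their difference), so the identity extends to all $x\in\R^d$ on that set. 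Intersecting over the countably many multi-indices gives the desired common full-measure set.

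Finally, part (b) is a pathwise embedding argument. For fixed $m$ and $p\geq2$, choose an integer $s$ with $2s>d$; then $W^{m+s,2}(\R^d)\hookrightarrow W^{m,p}(\R^d)$ continuously, since each derivative $D^\beta u$ with $|\beta|\leq m$ lies in $W^{s,2}\hookrightarrow L^2\cap L^\infty\hookrightarrow L^p$. As this is a bounded linear embedding, it carries the known regularity $u(\omega)\in C([0,T];W^{m+s,2})$ into $u(\omega)\in C([0,T];W^{m,p})$ for each $\omega$. The hard part of the whole proof lies in part (a), namely handling the interchange of $D^\alpha$ with the stochastic integral and, above all, producing one exceptional null set valid simultaneously for all $x$ and all $\alpha$; part (b) is then routine.
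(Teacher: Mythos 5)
Your proposal is correct and draws on the same toolkit as the paper---mollification of the weak formulation (your shifted-mollifier computation is exactly how the paper obtains its identity \eqref{eq:d1} in the proof of Corollary~\ref{cor:001}), the BDG-plus-Sobolev passage to the limit $\eps\downarrow 0$, and the embedding $W^{n,2}\hookrightarrow W^{m,p}$ for part (b)---but in part (a) you permute two steps, and this changes where the technical weight falls. The paper first applies $D^{\alpha}$ to the mollified identity \eqref{eq:d1}, which is trivial because $D^{\alpha}S_{\eps}=S_{\eps}D^{\alpha}$: the derivative lands on the smooth kernel, so no differentiation of a stochastic integral is ever needed; only then does it let $\eps\downarrow 0$, once for each $\alpha$. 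You instead pass to the limit first, obtaining the pointwise equation for $\alpha=0$, and then differentiate that equation, which forces you to justify $D^{\alpha}\int_t^T q^k\,dW^k_s=\int_t^T D^{\alpha}q^k\,dW^k_s$ by difference quotients. That step is workable but needs one remark beyond what you wrote: BDG/It\^o isometry yields only $L^2(\Omega)$-convergence of the difference quotients (hence a.s. convergence along subsequences), and you must combine it with the a.s. convergence that is already available because the stochastic integral equals $u$ minus the smooth drift terms; the two limits then coincide a.s., which identifies the derivative. The paper's ordering avoids this issue entirely, which is why its proof of (a) is one sentence. In compensation, your bookkeeping of the exceptional sets (rational $x$, continuity in $x$ of the remaining terms to extend the identity, countable intersection over $\alpha$) is spelled out more carefully than in the paper, and your part (b) coincides with the paper's argument.
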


\begin{proof}
  Assertion (a) can be derived immediately by (\ref{eq:d1}) and the exchange of operators $S_{\eps}$ and $D^{\alpha}$.
%
  Assertion (b) can be concluded easily by
  Sobolev's imbedding theorem. For this, recalling that
  $u(\omega) \in \bigcap_{n\geq 0}C([0,T];W^{n,2})$ for each
  $\omega$, by Sobolev's theorem we can find an $n$ such that $W^{n,2}$ is
  continuously imbedded into $W^{m,p}$. Therefore
  $u(\omega) \in C([0,T];W^{m,p})$ for each $\omega$ and
  the lemma is proved.
\end{proof}

\begin{proof}[Proof of Lemma \ref{lem:sm.eq.est}]
\emph{Step 1.} For positive integers $M,N$, we define
$$H_M(x)=x^2I_{\{-M\leq
x<M\}}+M(2x-M)I_{\{x\geq M\}}-M(2x+M)I_{\{x<-M\}}$$ and
$$G_{N,p}(x)=x^{p}I_{\{0\leq
x<N\}}+N^{p-1}[px-(p-1)N]I_{\{x\geq N\}}.$$ It is clear that $H_M \in
C^{1,1}(\R)$, $G_{N,p}\in C^{1,1}[0,\infty)$ satisfying $G(s),G'(s)>0$ and
$G''(s) \geq 0$ for all $s\in (0,\infty)$, and
\[
  H_M(x) \uparrow x^2,~~H_M'(x)\rrow 2x~; ~~~G_{N,p}(x)\uparrow x^p,~~
  G_{N,p}'(x)\uparrow p x^{p-1},
\]
as $M,N\rrow \infty$, respectively.

Moreover, $G_{N,p}$ satisfies that for all
$x \geq 0$,
\begin{equation}\label{001}
  xG_{N,p}'(x) \leq pG_{N,p}(x),~~~~
  |G_{N,p}'(x)|^{\frac{p}{p-1}} \leq p^{\frac{p}{p-1}}G_{N,p}(x).
\end{equation}
\smallskip

For simplicity, we denote
$$
  \Psi_{M} :=  \sum_{|\alpha|\leq
  m}H_M\left(D^\alpha u\right).
$$
Doing the generalized It$\hat {\rm o}$'s formula (c.f. \cite{ETZ07}) to
$${\rm e}^{Ks}G_{N,p}\big(\Psi_{M}\big),$$ for each $x$, by
\eqref{eq:trans} and \eqref{eq:pointwiseform} we have
\begin{eqnarray}\label{000}
&&{\rm e}^{Kt}G_{N,p}\big(\Psi_M\big)+K\int_t^T{\rm
e}^{Ks}G_{N,p}\big(\Psi_{M}\big)\,ds\nonumber\\
&=&{\rm e}^{KT}G_{N,p}\bigg(\sum_{|\alpha|\leq m}H_M(D^\alpha
\varphi)\bigg)+\int_t^T{\rm
e}^{Ks}G^{'}_{N,p}\big(\Psi_M\big)\sum_{|\beta|\leq m}H^{'}_M(D^\beta
u)D^{\beta}\big(\mathbb{D}u\big)\,ds\nonumber\\
&&-\int_t^T{\rm e}^{Ks}G^{'}_{N,p}\big(\Psi_M\big)\sum_{|\beta|\leq
m}I_{\{-M\leq D^\beta u<M\}}\|D^{\beta}(r-\sigma^{i}u_{x^i})\|^{2}\,ds\nonumber\\
&&-{1\over2}\int_t^T{\rm
e}^{Ks}G^{''}_{N,p}\big(\Psi_M\big)\bigg\|\sum_{|\beta|\leq m}H^{'}_M(D^\beta
u)D^{\beta}(r^{k}-\sigma^{ik}u_{x^i})\bigg\|^2\,ds\nonumber\\
&&-\int_t^T{\rm e}^{Ks}G^{'}_{N,p}\big(\Psi_M)\sum_{|\beta|\leq
m}H^{'}_M(D^\beta u)D^{\beta}(r^{k}-\sigma^{ik}u_{x^i})dW_s^k,
\end{eqnarray}
where we denote
$$\mathbb{D}u:=(a^{ij}-2\alpha^{ij})u_{x^ix^j}+\widetilde{b}^{i}u_{x^i}+cu
+\sigma^{ik}r^{k}_{x^i}+\nu^{k}r^{k}+f.$$ Integrating both sides of
\eqref{000} w.r.t. $x$ in $\R^d$ and noting that
$$G^{'}_{N,p}\big(\Psi_{M}\big)\sum_{|\beta|\leq m}H^{'}_M(D^\beta u)$$ are
bounded, by the stochastic Fubini theorem we have
\begin{eqnarray}\label{002}
&&\int_{\R^d}{\rm
e}^{Kt}G_{N,p}\big(\Psi_{M}\big)\,dx+K\int_t^T\!\!\!\int_{\R^d}{\rm
e}^{Ks}G_{N,p}\big(\Psi_{M}\big)\,ds\nonumber\\
&=&\int_{\R^d}{\rm e}^{KT}G_{N,p}\bigg(\sum_{|\alpha|\leq m}H_M(D^\alpha
\varphi)\bigg)\,dx\nonumber\\
&&+\int_t^T\!\!\!\int_{\R^d}{\rm
e}^{Ks}G^{'}_{N,p}\big(\Psi_{M}\big)\sum_{|\beta|\leq m}H^{'}_M(D^\beta
u)D^{\beta}\big(\mathbb{D}u\big)\,dxds\nonumber\\
&&-\int_t^T\!\!\!\int_{\R^d}{\rm
e}^{Ks}G^{'}_{N,p}\big(\Psi_{M}\big)\sum_{|\beta|\leq
m}I_{\{-M\leq D^\beta u<M\}}\|D^{\beta}(r-\sigma^{i}u_{x^i})\|^{2}\,dxds\nonumber\\
&&-{1\over2}\int_t^T\!\!\!\int_{\R^d}{\rm
e}^{Ks}G^{''}_{N,p}\big(\Psi_{M}\big)\bigg\|\sum_{|\beta|\leq
m}H^{'}_M(D^\beta
u)D^{\beta}(r^{k}-\sigma^{ik}u_{x^i})\bigg\|^2\,dxds\nonumber\\
&&-\int_t^T\!\!\!\int_{\R^d}{\rm
e}^{Ks}G^{'}_{N,p}\big(\Psi_{M}\big)\sum_{|\beta|\leq m}H^{'}_M(D^\beta
u)D^{\beta}(r^{k}-\sigma^{ik}u_{x^i})\,dxdW_s^k.
\end{eqnarray}
First note that $G_{N,p}(\Psi_{M})\leq N^{p-1}p\Psi$, hence
$|\int_{\R^d}G_{N,p}(\Psi_{M})\,dx|$ is bounded. Similarly,
$\int_{\R^d}G_{N,p}(\sum_{|\alpha|\leq m}H_M(D^\alpha \varphi))\,dx$ is
bounded. On the other hand, both $G^{'}_{N,p}(\Psi_{M})$ and $H^{'}_M(D^\beta
u)$ are bounded, thus the above stochastic integral is a martingale. Taking
first the expectation and then the limit of $M$ for all terms in (\ref{002}),
we have
\begin{eqnarray*}
&&\E\int_{\R^d}{\rm
e}^{Kt}G_{N,p}\big(\Psi\big)\,dx+K\E\int_t^T\!\!\!\int_{\R^d}{\rm
e}^{Ks}G_{N,p}\big(\Psi\big)\,dxds\nonumber\\
&=&\E\int_{\R^d}{\rm e}^{KT}G_{N,p}\bigg(\sum_{|\alpha|\leq
m}|D^\alpha\varphi|^2\bigg)\,dx\nonumber\\
&&+2\E\int_t^T\!\!\!\int_{\R^d}{\rm
e}^{Ks}G^{'}_{N,p}\big(\Psi\big)\sum_{|\beta|\leq m}D^\beta u
D^{\beta}\big(\mathbb{D}u\big)\,dxds\nonumber\\
&&-\E\int_t^T\!\!\!\int_{\R^d}{\rm
e}^{Ks}G^{'}_{N,p}\big(\Psi\big)\sum_{|\beta|\leq
m}\|D^{\beta}(r-\sigma^{i}u_{x^i})\|^{2}\,dxds\nonumber\\
&&-\E\int_t^T\!\!\!\int_{\R^d}{\rm
e}^{Ks}G^{''}_{N,p}\big(\Psi\big)\bigg\|\sum_{|\beta|\leq m}D^\beta
uD^{\beta}(r^{k}-\sigma^{ik}u_{x^i})\bigg\|^2\,dxds.
\end{eqnarray*}
Thus it follows from Lemma \ref{lem:basic} that
\begin{eqnarray*}
&&\E\int_{\R^d}{\rm
e}^{Kt}G_{N,p}\big(\Psi\big)\,dx+K\E\int_t^T\!\!\!\int_{\R^d}{\rm
e}^{Ks}G_{N,p}\big(\Psi\big)\,dxds\nonumber\\
&=&\E\int_{\R^d}{\rm e}^{KT}G_{N,p}\bigg(\sum_{|\alpha|\leq
m}|D^\alpha\varphi|^2\bigg)\,dx+\E\int_t^T\!\!\!\int_{\R^d}{\rm
e}^{Ks}\Theta(u,r,f,x,s,\omega)\,dxds\nonumber\\
&\leq&\E\int_{\R^d}{\rm e}^{KT}G_{N,p}\bigg(\sum_{|\alpha|\leq
m}|D^\alpha\varphi|^2\bigg)\,dx -(1-\eps)\E\int_t^T\!\!\!\int_{\R^d}{\rm
e}^{Ks}G^{'}_{N,p}(\Psi)\Upsilon \,dxds\nonumber\\
&&+{C\over\eps}\E\int_t^T\!\!\!\int_{\R^d}{\rm
e}^{Ks}G_{N,p}(\Psi)\,dxds+{C\over\eps}\E\int_t^T\!\!\!\int_{\R^d}{\rm
e}^{Ks}G^{'}_{N,p}(\Psi)\Psi
\,dxds\nonumber\\
&&+C\E\int_t^T\!\!\!\int_{\R^d}{\rm
e}^{Ks}G^{'}_{N,p}(\Psi)\sum_{|\beta| \leq m}|D^{\beta}f|^{2}\,dxds,\nonumber
\end{eqnarray*}
where the function $\Theta$ is defined by \eqref{formula:Theta} w.r.t. $G_{N,p}$ instead of $G$. Then, in view of \eqref{001} and by Young's
inequality, we have
\begin{eqnarray}\label{008}
&&\E\int_{\R^d}{\rm e}^{Kt}G_{N,p}(\Psi)\,dx
+(K-{C\over\eps}-{{Cp}\over\eps}-C)\E\int_t^T\!\!\!\int_{\R^d}{\rm
e}^{Ks}G_{N,p}(\Psi)\,dxds\nonumber\\
&&+(1-\eps)\E\int_t^T\!\!\!\int_{\R^d}{\rm e}^{Ks}G^{'}_{N,p}(\Psi)\Upsilon
\,dxds\nonumber\\
&\leq&\E\int_{\R^d}{\rm e}^{KT}G_{N,p}\big(\sum_{|\alpha|\leq m}|D^\alpha
\varphi|^2\big)\,dx+C{\rm
e}^{KT}({{p-1}\over p})^{p-1}\E\int_t^T\|f\|_{m,2p}^{2p}\,ds.
\end{eqnarray}
Choosing
$$\eps={1\over2}\quad \hbox{ \rm and }\quad K = 3C+2Cp+1,$$
 letting $N\to \infty$ in
(\ref{008}) and using the monotone convergence, we have
\begin{eqnarray}\label{010}
&&\E\int_0^T\!\!\!\int_{\R^d}\Psi^p\,dxds
+\E\int_0^T\!\!\!\int_{\R^d}\Psi^{p-1}\Upsilon
\,dxds\nonumber\\
&\leq&C{\rm e}^{Cp}\E\|\varphi\|_{m,2p}^{2p}+C{\rm
e}^{Cp}\E\int_0^T\|f\|_{m,2p}^{2p}\,ds<\infty.
\end{eqnarray}
\medskip

\emph{Step 2.} We estimate the term $\E\sup_{t\leq
T}\int_{\R^d}\Psi^p\,dx$. By the BDG inequality, it turns out that
\begin{eqnarray*}
&&\E\sup_{t\leq T}\bigg|\int_0^T\!\!\!\int_{\R^d}
G^{'}_{N,p}\big(\Psi_{M}\big)\sum_{|\beta|\leq m}H^{'}_M(D^\beta
u)D^{\beta}(r^{k}-\sigma^{ik}u_{x^i})\,dxdW_s^k\bigg|\nonumber\\
&\leq & C\E\bigg[\bigg\{\int_0^T\bigg\|\int_{\R^d}
G^{'}_{N,p}\big(\Psi_{M}\big)\sum_{|\beta|\leq m}H^{'}_M(D^\beta
u)D^{\beta}(r-\sigma^{i}u_{x^i})\,dx\bigg\|^2\,ds\bigg\}^{1\over2}\bigg]
\nonumber\\
&\leq&C\E\bigg[\bigg\{\int_0^T\bigg(\int_{\R^d}
G^{'}_{N,p}\big(\Psi_{M}\big)\sum_{|\beta|\leq m}|H^{'}_M(D^\beta
u)|\sum_{|\gamma|\leq m}\|D^{\gamma}r\|\,dx\bigg)^2\nonumber\\
&&\ \ \ \ \ \ \ \ \ \ \ \ \ \ +\bigg(\int_{\R^d}
G^{'}_{N,p}\big(\Psi_{M}\big)\sum_{|\beta|\leq m}|H^{'}_M(D^\beta
u)|\sum_{|\gamma|\leq
m}|D^{\gamma}u|\,dx\bigg)^2\nonumber\\
&&\ \ \ \ \ \ \ \ \ \ \ \ \ \ \ \ \ \ +\bigg(\int_{\R^d}
G^{'}_{N,p}\big(\Psi_{M}\big)\sum_{|\beta|\leq m}H^{'}_M(D^\beta
u)\sigma^{ik}D^{\beta}u_{x^i}\,dx\bigg)^2\,ds\bigg\}^{1\over2}\bigg]\nonumber\\
&\leq&C\E\bigg[\bigg\{\int_0^T\bigg(\int_{\R^d}
G^{'}_{N,p}\big(\Psi_{M}\big)\Psi_M \,dx \int_{\R^d} G^{'}_{N,p}
\big(\Psi_{M}\big)\Upsilon \,dx\nonumber\\
&&\ \ \ \ \ \ \ \ \ \ \ \ \ \ \ \ +\int_{\R^d} G^{'}_{N,p}\big(\Psi_{M}\big)\Psi_M
\,dx \int_{\R^d} G^{'}_{N,p}\big(\Psi_{M}\big)\Psi \,dx\nonumber\\
&&\ \ \ \ \ \ \ \ \ \ \ \ \ \ \ \ +\bigg|\int_{\R^d}
\{G_{N,p}\big(\Psi_{M}\big)\}_{x^i}\sigma^{ik}\,dx\bigg|^2\bigg)\,ds
\bigg\}^{1\over2}\bigg]\nonumber\\
&\leq&C\E\sqrt{\bigg(\sup_{t\leq T}\int_{\R^d}
G^{'}_{N,p}\big(\Psi_{M}\big)\Psi_{M} \,dx\bigg) \int_0^T\!\!\!\int_{\R^d}
G^{'}_{N,p}\big(\Psi_{M}\big)\Upsilon \,dxds}
\nonumber\\
&&+C\E\sqrt{\bigg(\sup_{t\leq T}\int_{\R^d}
G^{'}_{N,p}\big(\Psi_{M}\big)\Psi_{M} \,dx\bigg) \int_0^T\!\!\!\int_{\R^d}
G^{'}_{N,p}\big(\Psi_{M}\big)\Psi \,dxds}
\nonumber\\
&&+C\E\sqrt{\bigg(\sup_{t\leq T}\int_{\R^d}
G_{N,p}\big(\Psi_{M}\big)\,dx\bigg)\int_0^T\!\!\!\int_{\R^d} G_{N,p}
\big(\Psi_{M}\big)\,dxds}\nonumber\\
&\leq&{\eps\over p} \E\sup_{t\leq
T}\int_{\R^d}G^{'}_{N,p}\big(\Psi_{M}\big)\Psi_{M}
\,dx+Cp\E\int_0^T\!\!\!\int_{\R^d} G^{'}_{N,p}
\big(\Psi_{M}\big)\Upsilon \,dxds\nonumber\\
&&+{\eps\over p} \E\sup_{t\leq
T}\int_{\R^d}G^{'}_{N,p}\big(\Psi_{M}\big)\Psi_{M}
\,dx+Cp\E\int_0^T\!\!\!\int_{\R^d} G^{'}_{N,p}
\big(\Psi_{M}\big)\Psi \,dxds\nonumber\\
&&+\eps \E\sup_{t\leq
T}\int_{\R^d}G_{N,p}\big(\Psi_{M}\big)\,dx+C\E\int_0^T\!\!\!\int_{\R^d}
G_{N,p}\big(\Psi_{M}\big)\,dxds.
\end{eqnarray*}
Hence, Considering (\ref{002}) with $K=0$, by BDG inequality
and \eqref{001} we have
\begin{eqnarray*}
&&(1-3\eps)\E\sup_{t\leq T}\int_{\R^d}G_{N,p}\big(\Psi_{M}\big)\,dx\nonumber\\
&\leq&\E\int_{\R^d}G_{N,p}\bigg(\sum_{|\alpha|\leq m}H_M(D^\alpha
\varphi)\bigg)\,dx\nonumber\\
&&+\E\int_0^T\bigg|\int_{\R^d} G^{'}_{N,p}\big(\Psi_{M}\big)\sum_{|\beta|\leq
m}H^{'}_M(D^\beta
u)D^{\beta}\big(\mathbb{D}u\big)\,dx\nonumber\\
&&\ \ \ \ \ \ \ \ \ \ \ \ \ \ -\int_{\R^d}
G^{'}_{N,p}\big(\Psi_{M}\big)\sum_{|\beta|\leq
m}I_{\{-M\leq D^\beta u<M\}}\|D^{\beta}(r-\sigma^{i}u_{x^i})\|^{2}\,dx\nonumber\\
&&\ \ \ \ \ \ \ \ \ \ \ \ \ \ -{1\over2}\int_{\R^d}
G^{''}_{N,p}\big(\Psi_{M}\big)\Big\|\sum_{|\beta|\leq m}H^{'}_M(D^\beta
u)D^{\beta}(r^{k}-\sigma^{ik}u_{x^i})\Big\|^2\,dx\bigg|\,ds\nonumber\\
&&+Cp\E\int_0^T\!\!\!\int_{\R^d} G^{'}_{N,p}\big(\Psi_{M}\big)\Upsilon \,dxds
+Cp\E\int_0^T\!\!\!\int_{\R^d} G^{'}_{N,p}\big(\Psi_{M}\big)\Psi
\,dxds\\
&&+C\E\int_0^T\!\!\!\int_{\R^d} G_{N,p}\big(\Psi_{M}\big)\,dxds.\nonumber
\end{eqnarray*}
Taking the limit of $M$, by Lemma \ref{lem:basic} and \eqref{001} again we have
\begin{eqnarray}\label{012}
&&(1-3\eps)\E\sup_{t\leq T}\int_{\R^d}G_{N,p}\big(\Psi\big)\,dx\nonumber\\
&\leq&\E\int_{\R^d}G_{N,p}\bigg(\sum_{|\alpha|\leq m}|D^\alpha
\varphi|^2\bigg)\,dx+\E\int_0^T\bigg|\int_{\R^d}
\Theta(u,r,f,x,s,\omega)\,dx\bigg|\,ds\nonumber\\
&&+Cp\E\int_0^T\!\!\!\int_{\R^d}G^{'}_{N,p}\big(\Psi\big)\Upsilon
\,dxds+Cp\E\int_0^T\!\!\!\int_{\R^d}G^{'}_{N,p}\big(\Psi\big)\Psi
\,dxds\nonumber\\
&&+C\E\int_0^T\!\!\!\int_{\R^d}G_{N,p}\big(\Psi\big)\,dxds\nonumber\\
&\leq&\E\int_{\R^d}G_{N,p}\bigg(\sum_{|\alpha|\leq m}|D^\alpha
\varphi|^2\bigg)\,dx+\E\int_0^T\!\!\!\int_{\R^d}\sum_{|\beta| \leq
m}G'(\Psi)|D^{\beta}f|^{2}\,dxds\nonumber\\
&&+(Cp+1)\E\int_0^T\!\!\!\int_{\R^d}G^{'}_{N,p}\big(\Psi\big)\Upsilon
\,dxds+({C\over
\eps}+Cp)\E\int_0^T\!\!\!\int_{\R^d}G^{'}_{N,p}\big(\Psi\big)\Psi
\,dxds\nonumber\\
&&+({C\over\eps}+C)\E\int_0^T\!\!\!\int_{\R^d}G_{N,p}\big(\Psi\big)\,dxds\nonumber\\
&\leq&C{\rm e}^{Cp}\E\|\varphi\|_{m,2p}^{2p}+C{\rm e}^{Cp}
\E\int_t^T\|f\|_{m,2p}^{2p}\,ds
+(Cp+1)\E\int_0^T\!\!\!\int_{\R^d}G^{'}_{N,p}\big(\Psi\big)\Upsilon
\,dxds\nonumber\\
&&+({Cp\over
\eps}+Cp^{2}+{C\over\eps}+C)\E\int_0^T\!\!\!\int_{\R^d}G_{N,p}\big(\Psi\big)\,dxds.
\end{eqnarray}
Then choosing $\eps = \frac{1}{6}$ and letting $N\to \infty$ in (\ref{012}), in view of (\ref{010}), we have
\begin{eqnarray*}\label{015}
\E\sup_{t\leq T}\int_{\R^d}\Psi^p\,dx &\leq&C {\rm
e}^{Cp}\E\|\varphi\|_{m,2p}^{2p} +C {\rm e}^{Cp}
\E\int_0^T\|f\|_{m,2p}^{2p}\,ds\nonumber\\
&&+Cp^{2}\E\int_0^T\!\!\!\int_{\R^d}\Psi^p\,dxds
+Cp\E\int_0^T\!\!\!\int_{\R^d}\Psi^{p-1}\Upsilon \,dxds\nonumber\\
&\leq&C{\rm e}^{Cp}\E\|\varphi\|_{m,2p}^{2p}+C{\rm
e}^{Cp}\E\int_0^T\|f\|_{m,2p}^{2p}\,ds<\infty.
\end{eqnarray*}
The proof of Lemma \ref{lem:sm.eq.est} is complete.
\end{proof}

\bibliographystyle{model1b-num-names}

\end{document}